\definecolor{veryverylight}{gray}{0.85}
\definecolor{verylight}{gray}{0.75}
\definecolor{light}{gray}{0.65}
\definecolor{medium}{gray}{0.55}
\algnewcommand\algorithmicparfor{\textbf{parallel\ for}}
\algnewcommand\algorithmicpardo{\textbf{do}}
\algnewcommand\algorithmicendparfor{\textbf{end\ parallel\ for}}
\newtheorem{theorem}{Theorem}
\newtheorem{lemma}[theorem]{Lemma}
\newtheorem{proposition}[theorem]{Proposition}
\newdefinition{remark}[theorem]{Remark}
\newdefinition{definition}[theorem]{Definition}
\newproof{proof}{Proof}
\def\ZZ{{\mathbb Z}}
\def\RR{{\mathbb R}}
\def\QQ{{\mathbb Q}}
\def\CC{{\mathbb C}}
\def\cH{{\mathcal H}}
\def\cG{{\mathcal G}}
\def\cK{{\mathcal K}}
\def\Hilb{\operatorname{Hilb}}
\def\GL{\operatorname{GL}}
\def \cone {\operatorname{cone}}
\def\conv{\operatorname{conv}}
\def\hht{\operatorname{ht}}
\def\vol{\operatorname{vol}}
\def\para{\operatorname{par}}
\def\tr{\textup{tr}}
\def\ttt#1{\texttt{#1}}
\let\epsilon=\varepsilon
\def\strut{\rule{0ex}{2.5ex}}
\journal{J. Symb. Comp.}
\begin{document}

\begin{frontmatter}

\title{The power of pyramid decomposition in Normaliz}
\author[uos]{Winfried Bruns}
\ead{wbruns@uos.de}

\author[ssi]{Bogdan Ichim}
\ead{bogdan.ichim@imar.ro}

\author[uos]{Christof S\"{o}ger}
\ead{csoeger@uos.de}

\address[uos]{Universit\"{a}t Osnabr\"{u}ck, FB Mathematik/Informatik\\ 49069 Osna\-br\"{u}ck, Germany}
\address[ssi]{Simion Stoilow Institute of Mathematics of the Romanian Academy, Research Unit 5\\ C.P. 1-764, 010702 Bucharest, Romania}

\begin{abstract}
We describe the use of pyramid decomposition in Normaliz, a
software tool for the computation of Hilbert bases and
enumerative data of rational cones and affine mono\-ids. Pyramid
decomposition in connection with efficient parallelization and
streamlined evaluation of simplicial cones has enabled Normaliz
to process triangulations of size $\approx 5\cdot 10^{11}$ that
arise in the computation of Ehrhart series related to the theory of
social choice.
\end{abstract}

\begin{keyword}
Hilbert basis \sep Ehrhart series \sep Hilbert series \sep rational polytope
\sep volume \sep triangulation \sep pyramid decomposition

\MSC[2010] 52B20 \sep 13F20 \sep 14M25 \sep 91B12
\end{keyword}

\end{frontmatter}

\section{Introduction}

Normaliz is a software tool for the computation of Hilbert
bases and enumerative data of rational cones and affine
monoids. In the 17 years of its existence it has found numerous
applications; for example, in integer programming (Bogart, Raymond and Thomas
\cite{BRT}), algebraic geometry (Craw, Maclagan and Thomas \cite{CMT05}),
theoretical physics (Kappl, Ratz and Staudt \cite{KRS}), commutative algebra
(Sturmfels and Welker \cite{SW}) or elimination theory (Emiris, Kalinka,
Konaxis and Ba \cite{EKKB}). Normaliz is used in polymake \cite{JMP}, a
computer system for polyhedral geometry, and in Regina \cite{Reg}, a system for
computations with $3$-manifolds.

The mathematics of the very first version was described in
Bruns and Koch \cite{BK}, and the details of version 2.2 (2009)
are contained in Bruns and Ichim \cite{BI}. In this article we
document the mathematical ideas and the most recent
development
\footnote{Version 3.0 is available from \url{http://www.math.uos.de/normaliz}.}
resulting from them. It has extended the scope of Normaliz by
several orders of magnitude.

In algebraic geometry the spectra of algebras $K[C\cap L]$
where $C$ is a pointed cone and $L$ a lattice, both contained in
a space $\RR^d$, are the building blocks of toric varieties;
for example, see Cox, Little and Schenck \cite{CLS}. In
commutative algebra the algebras $K[C\cap L]$ which are exactly
the normal affine monoid algebras are of interest themselves.
It is clear that an algorithmic approach to toric geometry or
affine monoid algebras depends crucially on an efficient
computation of the unique minimal system of generators of a
monoid $C\cap L$ that we call its \emph{Hilbert basis}. Affine
monoids of this type are extensively discussed by Bruns and
Gubeladze \cite{BG}. The existence and uniqueness of such a
minimal system of generators is essentially due to Gordan
\cite{Go} and was proven in full generality by van der Corput
\cite{vdC}.

The computation of Hilbert bases amounts to solving homogeneous linear diophantine systems of inequalities (defining the cone) and equations and congruences (defining the lattice). Since version 2.11 Normaliz also solves inhomogeneous linear diophantine systems; in other words, it computes lattice points in polyhedra (and not just cones).

The term ``Hilbert basis'' was actually coined in integer
programming (with $L=\ZZ^d$) by Giles and Pulleyblank \cite{GP}
in connection with totally dual integral (TDI) systems. Also
see Schrijver \cite[Sections 16.4 and 22.3]{Scr}. One should
note that in integer programming usually an arbitrary, not
necessarily minimal, system of generators of $C\cap \ZZ^d$ is
called a Hilbert basis of $C$. From the computational viewpoint
and also in bounds for such systems of generators, minimality
is so important that we include it in the definition. Aardal,
Weismantel and Wolsey \cite{AWW} discuss Hilbert bases and
their connection with Graver Bases (of sublattices) and Gröbner
bases (of binomial ideals). (At present, Normaliz does not
include Graver or Gröbner bases; 4ti2 \cite{4ti2} is a tool for their
computation.) It should
be noted that Normaliz, or rather a predecessor, was
instrumental in finding a counterexample to the Integral
Carathéodory Property (Bruns, Gubeladze, Henk, Weismantel and
Martin \cite{BGHMW}) that was proposed by Seb\H{o} \cite{Seb}.
For more recent developments in nonlinear optimization using
Graver bases, and therefore Hilbert bases, see J. De Loera, R.
Hemmecke, S. Onn, U.G. Rothblum, R. Weismantel \cite{DHORW},
Hemmecke, Köppe and Weismantel \cite{HKW}, and Hemmecke, Onn
and Weismantel \cite{HOW}.

Hilbert functions and polynomials of graded algebras and
modules were introduced by Hilbert himself \cite{Hi} (in
contrast to Hilbert bases). These invariants, and the
corresponding generating functions, the Hilbert series, are
fundamental in algebraic geometry and commutative algebra. See
\cite[Chapter 6]{BG} for a brief introduction to this
fascinating area. Ehrhart functions were defined by Ehrhart
\cite{Ehr} as lattice point counting functions in multiples of
rational polytopes; see Beck and Robbins \cite{BeRo} for a
gentle introduction. Stanley \cite{Sta1} interpreted Ehrhart
functions as Hilbert functions, creating a powerful link
between discrete convex geometry and commutative algebra. In
the last decades Hilbert functions have been the objective of a
large number of articles. They even come up in optimization
problems; for example, see De Loera, Hemmecke, Köppe and
Weismantel \cite{DHKW}. Surprisingly, Ehrhart functions have an
application in compiler optimization; see Clauss, Loechner and Wilde
\cite{Clauss} for more information.

From the very beginning Normaliz has used lexicographic triangulations;
see \cite{BI}, \cite{BK}   for the use in Normaliz and De Loera,
Rambau and Santos \cite{DRS} for (regular) triangulations of polytopes.
(Since version 2.1 Normaliz contains a second,
triangulation free Hilbert basis algorithm, originally due to
Pottier \cite{Pot} and called \emph{dual} in the following; see
\cite{BI}). Lexicographic triangulations are essentially
characterized by being incremental in the following sense.
Suppose that the cone $C$ is generated by vectors
$x_1,\dots,x_n\in\RR^d$; set $C_0=0$ and $C_i=\RR_+x_1+\dots+\RR_+x_i$,
$i=1,\dots,n$. Then the lexicographic triangulation $\Lambda$
(for the ordered system $x_1,\dots,x_n$) restricts to a
triangulation of $C_i$ for $i=0,\dots,n$. Lexicographic
triangulations are easy to compute, and go very well with
Fourier-Motzkin elimination that computes the support
hyperplanes of $C$ by successive extension from $C_i$ to
$C_{i+1}$, $i=0,\dots,n-1$. The triangulation $\Lambda_{i}$ of
$C_{i}$ is extended to $C_{i+1}$ by all simplicial cones
$F+\RR_+x_{i+1}$ where $F\in \Lambda_i$ is visible from
$x_{i+1}$.

As simple as the computation of the lexicographic triangulation
is, the algorithm in the naive form just described has two
related drawbacks: (i)~one must store $\Lambda_i$ and this
becomes very difficult for sizes $\ge 10^8$; (ii)~in order to
find the facets $F$ that are visible from $x_{i+1}$ we must
match the simplicial cones in $\Lambda_i$ with the support
hyperplanes of $C_i$ that are visible from $ x_{i+1}$. While
(i)~is a pure memory problem, (ii)~quickly leads to impossible
computation times.

\emph{Pyramid decomposition} is the basic idea that has enabled
Normaliz to compute dimension $24$ triangulations of size
$\approx 5\cdot 10^{11}$ in acceptable time on standard
multiprocessor systems such as SUN xFire 4450 or Dell PowerEdge
R910. Instead of going for the lexicographic triangulation
directly, we first decompose $C$ into the pyramids generated by
$x_{i+1}$ and the facets of $C_i$ that are visible from
$x_{i+1}$, $i=0,\dots,n-1$. These pyramids (of level $0$) are
then decomposed into pyramids of level $1$ etc. While the level
$0$ decomposition need not be a polyhedral subdivision in the
strict sense, pyramid decomposition stops after finitely many
iterations at the lexicographic triangulation; see Section
\ref{LexTri} for the details and Figure \ref{FigPyrDec} for a simple
example.

Pure pyramid decomposition is very memory friendly, but
its computation times are even more forbidding than those of
pure lexicographic triangulation since too many Four\-ier-Motzkin
eliminations become necessary, and almost all of them are
inevitably wasted. That Normaliz can nevertheless cope with
extremely large triangulations relies on a well balanced
combination of both strategies that we outline in Section
\ref{Curr}.

It is an important aspect of pyramid decomposition that it is
very parallelization friendly since the pyramids can be treated
independently of each other. Normaliz uses OpenMP for shared
memory systems. Needless to say that triangulations of the size
mentioned above can hardly be reached in serial computation.

For Hilbert basis computations pyramid decomposition has a
further and sometimes tremendous advantage: one can avoid the
triangulation of those pyramids for which it is a priori clear
that they will not supply new candidates for the Hilbert basis.
This observation, on which the contribution of the authors to
\cite{BHIKS} (jointly with Hemmecke and Köppe) is based,
triggered the use of pyramid decomposition as a general
principle. See Remark \ref{partial} for a brief discussion.

In Section \ref{Eval} we describe the steps by which Normaliz
evaluates the simplicial cones in the triangulation for the
computation of Hilbert bases, volumes and Hilbert series. After the introduction of pyramid decomposition,
evaluation almost always takes significantly more time than the
triangulation. Therefore it must be streamlined as much as
possible. For the Hilbert series Normaliz uses a Stanley
decomposition \cite{Sta}. That it can be found efficiently
relies crucially on an idea of Köppe and Verdoolaege \cite{KV}.

We document the scope of Normaliz's computations in Section
\ref{Comp}. The computation times are compared with those of
4ti2 \cite{4ti2} (Hilbert bases) and LattE \cite{LatInt}
(Hilbert series). The test examples have been chosen from the
literature (Beck and Ho\c{s}ten \cite{BeH}, Ohsugi and Hibi
\cite{OH}, Schürmann \cite{Sch}, Sturmfels and Welker
\cite{SW}), the LattE distribution and the Normaliz
distribution. The desire to master the Hilbert series
computations asked for in Schürmann's paper \cite{Sch} was an
important stimulus in the recent development of Normaliz.

\section{Overview of the Normaliz algorithm}\label{Over}

The \emph{primal} Normaliz algorithm is triangulation based, as mentioned in the introduction. Normaliz contains a second, \emph{dual} algorithm for the computation of Hilbert bases that implements ideas of Pottier \cite{Pot}. The dual algorithm is treated in \cite{BI}, and has not changed much in the last years. We skip it in this article, except in Section \ref{Comp} where computation times of the primal and dual algorithm will be compared.

The primal algorithm starts from a pointed rational cone $C\subset\RR^d$ given by a system of generators $x_1,\dots,x_n$ and a sublattice $L\subset\ZZ^d$ that contains $x_1,\dots,x_n$. (Other types of input data are first transformed into this format.) The algorithm is composed as follows:
\begin{enumerate}
\item Initial coordinate transformation to $E=L\cap (\RR x_1+\dots+\RR x_n)$;
\item Fourier-Motzkin elimination computing the support hyperplanes of $C$;
\item pyramid decomposition and computation of the lexicographic triangulation $\Delta$;
\item evaluation of the simplicial cones in the triangulation:
\begin{enumerate}
\item enumeration of the set of lattice points $E_\sigma$ in the fundamental domain of a simplicial subcone $\sigma$,
\item reduction of $E_\sigma$ to the Hilbert basis $\Hilb(\sigma)$,
\item Stanley decomposition for the Hilbert series of $\sigma\cap L$;
\end{enumerate}
\item Collection of the local data:
\begin{enumerate}
\item reduction of $\bigcup_{\sigma\in\Delta} \Hilb(\sigma)$ to $\Hilb(C\cap L)$,
\item accumulation of the Hilbert series of the $\sigma\cap L$;
\end{enumerate}
\item reverse coordinate transformation to $\ZZ^d$.
\end{enumerate}

The algorithm does not strictly follow this chronological order, but interleaves steps 2--5 in an intricate way to ensure low memory usage and efficient parallelization. The steps 2 and 5 are treated in \cite{BI}, and there is not much to add here, except that 2 is now modified by the pyramid decomposition. Step 3 is described in Sections \ref{LexTri} and \ref{Curr}, and step 4 is the subject of Section \ref{Eval}. In view of the initial and final coordinate transformation we can assume $E=\ZZ^d$, and suppress the reference to the lattice in the following.

Note that the computation goals of Normaliz can be restricted, for example to the volume of a rational polytope. Then the evaluation of a simplicial cone just amounts to a determinant calculation. Another typical restricted computation goal is the lattice points contained in such a polytope. Then the reduction is replaced by a selection of degree $1$ points from the candidate set.

The algorithms described in this paper have been implemented in version 3.0.

\section{Lexicographic triangulation and pyramid decomposition}\label{LexTri}

\subsection{Lexicographic triangulation}

Consider vectors $x_1,\dots,x_n\in\RR^d$. For Normaliz these
must be integral vectors, but integrality is irrelevant in this
section. We want to compute the support hyperplanes of the cone
$$
C=\cone(x_1,\dots,x_n)=\RR_+x_1+\dots+\RR_+x_n
$$
and a triangulation of $C$  with rays through $x_1,\dots,x_n$. Such a triangulation is a
polyhedral subdivision of $C$ into simplicial subcones $\sigma$
generated by linearly independent subsets of
$\{x_1,\dots,x_n\}$.

For a triangulation $\Sigma$ of a cone $C$ and a subcone $C'$
we set
$$
\Sigma|C'=\{\sigma\cap C': \sigma\in \Sigma\}.
$$
In general $\Sigma|C'$ need not be a triangulation of $C'$, but
it is so if $C'$ is a face of $C$.

The \emph{lexicographic} (or \emph{placing}) triangulation
$\Lambda(x_1,\dots,x_n)$ of $\cone(x_1,\dots,x_n)$ can be
defined recursively as follows: (i)~the triangulation of the
zero cone is the trivial one, (ii)~$\Lambda(x_1,\dots,x_n)$ is
given by
$$
\Lambda(x_1,\dots,x_n)= \Lambda(x_1,\dots,x_{n-1})\cup \{\cone(\sigma,x_n):\sigma \in
\Lambda(x_1,\dots,x_{n-1}) \text{ visible from }x_n\}
$$
where $\sigma$ is \emph{visible} from $x_n$ if
$x_n\notin\cone(x_1,\dots,x_{n-1})$ and the line segment
$[x_n,y]$ for every point $y$ of $\sigma$ intersects
$\cone(x_1,\dots,x_{n-1})$ only in $y$. Note that a polyhedral
complex is closed under the passage to faces, and the
definition above takes care of it.
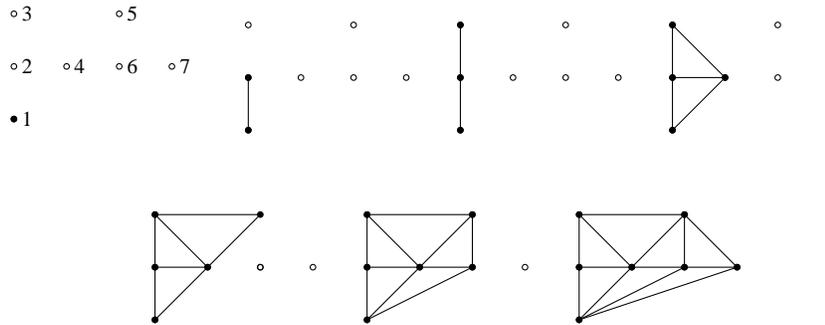
\begin{figure}[hbt]
\begin{center}
\begin{footnotesize}
\begin{tikzpicture}
[scale=0.7,auto=left]
\foreach \x/\y/\z in {0/0/1, 0/1/2, 0/2/3, 1/1/4, 2/1/6, 2/2/5, 3/1/7}
{\draw (\x,\y)  circle (1.5pt) node at (\x.25,\y){\z};}
\foreach \x/\y in {0/0}
{ \filldraw[fill=black] (\x,\y)  circle (1.5pt);}
\end{tikzpicture}
\qquad
\begin{tikzpicture}
[scale=0.7,auto=left]
\foreach \x/\y in {0/0, 0/1, 0/2, 1/1, 2/1, 2/2, 3/1}
{\draw (\x,\y)  circle (1.5pt);}
\foreach \x/\y in {0/0, 0/1}
{ \filldraw[fill=black] (\x,\y)  circle (1.5pt);}
\draw (0,0) -- (0,1);
\end{tikzpicture}
\qquad
\begin{tikzpicture}
[scale=0.7,auto=left]
\foreach \x/\y in {0/0, 0/1, 0/2, 1/1, 2/1, 2/2, 3/1}
{\draw (\x,\y)  circle (1.5pt);}
\foreach \x/\y in {0/0, 0/1, 0/2}
{ \filldraw[fill=black] (\x,\y)  circle (1.5pt);}
\draw (0,0) -- (0,1);
\draw (0,1) -- (0,2);
\end{tikzpicture}
\qquad
\begin{tikzpicture}
[scale=0.7,auto=left]
\foreach \x/\y in {0/0, 0/1, 0/2, 1/1, 2/1, 2/2, 3/1}
{\draw (\x,\y)  circle (1.5pt);}
\foreach \x/\y in {0/0, 0/1, 0/2, 1/1}
{ \filldraw[fill=black] (\x,\y)  circle (1.5pt);}
\draw (0,0) -- (0,1);
\draw (0,1) -- (0,2);
\draw (1,1) -- (0,1);
\draw (1,1) -- (0,0);
\draw (1,1) -- (0,2);
\end{tikzpicture}
\qquad
\vspace*{1cm}

\begin{tikzpicture}
[scale=0.7,auto=left]
\foreach \x/\y in {0/0, 0/1, 0/2, 1/1, 2/1, 2/1, 3/1}
{\draw (\x,\y)  circle (1.5pt);}
\foreach \x/\y in {0/0, 0/1, 0/2, 1/1, 2/2}
{ \filldraw[fill=black] (\x,\y)  circle (1.5pt);}
\draw (0,0) -- (0,1);
\draw (0,1) -- (0,2);
\draw (1,1) -- (0,1);
\draw (1,1) -- (0,0);
\draw (1,1) -- (0,2);
\draw (2,2) -- (0,2);
\draw (2,2) -- (1,1);
\end{tikzpicture}
\qquad
\begin{tikzpicture}
[scale=0.7,auto=left]
\foreach \x/\y in {0/0, 0/1, 0/2, 1/1, 2/1, 2/2, 3/1}
{\draw (\x,\y)  circle (1.5pt);}
\foreach \x/\y in {0/0, 0/1, 0/2, 1/1, 2/1, 2/2}
{ \filldraw[fill=black] (\x,\y)  circle (1.5pt);}
\draw (0,0) -- (0,1);
\draw (0,1) -- (0,2);
\draw (1,1) -- (0,1);
\draw (1,1) -- (0,0);
\draw (1,1) -- (0,2);
\draw (2,1) -- (1,1);
\draw (2,1) -- (0,0);
\draw (1,1) -- (2,2);
\draw (2,2) -- (2,1);
\draw (2,2) -- (0,2);
\end{tikzpicture}
\qquad
\begin{tikzpicture}
[scale=0.7,auto=left]
\foreach \x/\y in {0/0, 0/1, 0/2, 1/1, 2/1, 2/2, 3/1}
{\draw (\x,\y)  circle (1.5pt);}
\foreach \x/\y in {0/0, 0/1, 0/2, 1/1, 2/1, 2/2, 3/1}
{ \filldraw[fill=black] (\x,\y)  circle (1.5pt);}
\draw (0,0) -- (0,1);
\draw (0,1) -- (0,2);
\draw (1,1) -- (0,1);
\draw (1,1) -- (0,0);
\draw (1,1) -- (0,2);
\draw (2,1) -- (1,1);
\draw (2,1) -- (0,0);
\draw (2,2) -- (1,1);
\draw (2,2) -- (2,1);
\draw (2,2) -- (0,2);
\draw (3,1) -- (2,2);
\draw (3,1) -- (0,0);
\draw (3,1) -- (2,1);
\end{tikzpicture}
\end{footnotesize}
\end{center}
\caption{Genesis of a lexicographic triangulation}\label{figlex}
\end{figure}

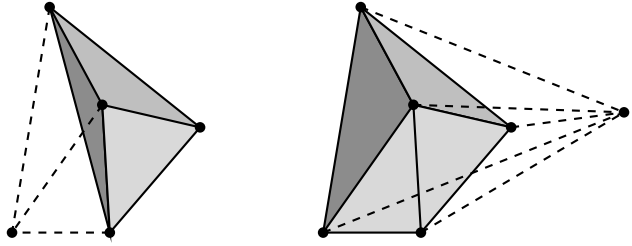
\begin{figure}[hbt]
\begin{center}
\begin{tikzpicture}
\coordinate[] (A) at (0,0);
\coordinate[] (B) at (0.5,3);
\coordinate[] (C) at (1.2,1.7);
\coordinate[] (D) at (2.5,1.4);
\coordinate[] (E) at (4,1.6);
\coordinate[] (F) at (1.3,0);

\draw[thick, draw=black, fill=medium] (B) -- (F) -- (C);
\draw[thick, draw=black, fill=veryverylight] (C) -- (F) -- (D);
\draw[thick, draw=black, fill=verylight] (C) -- (B) -- (D) -- cycle;
\draw[thick, dashed] (B) -- (A);
\draw[thick, dashed] (C) -- (A);
\draw[thick, dashed] (F) -- (A);
\fill (A) circle (2pt);
\fill (B) circle (2pt);
\fill (C) circle (2pt);
\fill (D) circle (2pt);
\fill (F) circle (2pt);
\end{tikzpicture}
\qquad\qquad
\begin{tikzpicture}
\coordinate[] (A) at (0,0);
\coordinate[] (B) at (0.5,3);
\coordinate[] (D) at (2.5,1.4);
\coordinate[] (C) at (1.2,1.7);
\coordinate[] (E) at (4,1.6);
\coordinate[] (F) at (1.3,0);

\draw[thick, fill=medium] (A) -- (B) -- (C);
\draw[thick, draw=black, fill=verylight] (C) -- (B) -- (D) -- cycle;
\draw[thick, fill=veryverylight] (F) -- (A) -- (C) -- (D) -- cycle;

\draw[thick] (C)--(F);

\draw[thick, dashed] (B)--(E);
\draw[thick, dashed] (C)--(E);
\draw[thick, dashed] (D)--(E);
\draw[thick, dashed] (A)--(E);
\draw[thick, dashed] (F)--(E);
\fill (A) circle (2pt);
\fill (B) circle (2pt);
\fill (C) circle (2pt);
\fill (D) circle (2pt);
\fill (E) circle (2pt);
\fill (F) circle (2pt);
\end{tikzpicture}
\end{center}
\caption{A lexicographic triangulation in cone dimension $4$}\label{FigLex2}
\end{figure}

In the algorithms below, a
polyhedral subdivision can always be represented by its maximal
faces which for convex full dimensional polyhedra are the full
dimensional members in the subdivision. For simplicial
subdivisions of cones one uses of course that the face
structure is completely determined by set theory: every subset
$E$ of the set of generators spans a conical face of dimension
$|E|$.

We state some useful properties of lexicographic
triangulations:

\begin{proposition}\label{lex}
With the notation introduced, let $C_i=\cone(x_1,\dots,x_i)$
and $\Lambda_i=\Lambda(x_1,\allowbreak \dots,\allowbreak x_i)$ for
$i=1,\dots,n$.
\begin{enumerate}
\item $\Lambda_n$ is the unique triangulation of $C$  with
    rays through a subset of $\{x_1,\dots,x_n\}$ that
    restricts to a triangulation of $C_i$ for $i=1,\dots,n$
    and $\Lambda|C_i$ has rays through a subset of
    $\{x_1,\dots,x_i\}$.
\item For every face $F$ of $C$ the restriction $\Lambda|F$
    is the lexicographic triangulation
    $\Lambda(x_{i_1},\allowbreak\dots,\allowbreak x_{i_m})$
    where
    $\{x_{i_1},\dots,x_{i_m}\}=F\cap\{x_1,\dots,x_n\}$ and
    $i_1<\dots<i_m$.
\item If $\dim C_i>\dim C_{i-1}$, then
    $\Lambda=\Lambda(x_1,\dots,x_{i-2},x_i,x_{i-1},x_{i+1},\dots,x_n)$.
\item
    $\Lambda=\Lambda(x_{i_1},\dots,x_{i_d},x_{j_1},\dots,x_{j_{n-d}})$
    where $(i_1,\dots,i_d)$ is the lexicographic smallest
    index vector of a rank $d$ subset of
    $\{x_1,\dots,x_n\}$ and $j_1<\dots<j_{n-d}$ lists the
    complementary indices.
\end{enumerate}
\end{proposition}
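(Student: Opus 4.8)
The plan is to prove (1) first and to derive (2)--(4) from the uniqueness assertion in (1). For existence in (1) I would induct on $n$ using the defining recursion: the cones of $\Lambda_n$ not already present in $\Lambda_{n-1}$ all contain $x_n$, and each such cone $\cone(\sigma,x_n)$ meets $C_{n-1}$ exactly in the face $\sigma\in\Lambda_{n-1}$ (visibility forbids interior points with a positive $x_n$-coordinate from lying in $C_{n-1}$); hence $\Lambda_n|C_{n-1}=\Lambda_{n-1}$, and by induction $\Lambda_n|C_i=\Lambda_i$ with rays through $\{x_1,\dots,x_i\}$. For uniqueness let $\Sigma$ be any triangulation with the stated restriction property. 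Restricting to $C_{n-1}$ and applying the inductive hypothesis gives $\Sigma|C_{n-1}=\Lambda_{n-1}$. Every maximal cone of $\Sigma$ not contained in $C_{n-1}$ must use $x_n$ as a ray, the remaining rays lying among $x_1,\dots,x_{n-1}$, so it has the form $\cone(\tau,x_n)$ with $\tau$ a cone of $\Sigma|C_{n-1}=\Lambda_{n-1}$; matching the decomposition of $C_n$ into $C_{n-1}$ and the cones $\cone(\tau,x_n)$ forces the $\tau$ to be precisely the faces visible from $x_n$, exactly as in the recursion, whence $\Sigma=\Lambda_n$.

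For (2) I would apply the uniqueness in (1) to the cone $F$ with its generators $x_{i_1},\dots,x_{i_m}$. The excerpt already records that $\Lambda|F$ is a triangulation of $F$, and its rays pass through $F\cap\{x_1,\dots,x_n\}=\{x_{i_1},\dots,x_{i_m}\}$. Writing $F_j=\cone(x_{i_1},\dots,x_{i_j})$, the key identity is $F_j=F\cap C_{i_j}$: the inclusion $\supseteq$ is clear, and $\subseteq$ follows from the face property of $F$, namely that a point of $F$ expressed as a nonnegative combination of $x_1,\dots,x_{i_j}$ can only involve those generators that themselves lie in $F$. Granting that restriction is transitive, $(\Lambda|F)|F_j=\Lambda|(F\cap C_{i_j})=(\Lambda|C_{i_j})|F=\Lambda_{i_j}|F$ is a triangulation of $F_j$ with rays through $\{x_{i_1},\dots,x_{i_j}\}$. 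Thus $\Lambda|F$ satisfies the characterizing property of (1) for the sequence $x_{i_1},\dots,x_{i_m}$ and must equal their lexicographic triangulation.

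For (3) I would again invoke uniqueness. Swapping $x_{i-1}$ and $x_i$ alters only the partial cone at index $i-1$, replacing $C_{i-1}$ by $\widetilde C:=\cone(x_1,\dots,x_{i-2},x_i)$; at every other index the partial cone is unchanged and $\Lambda$ restricts correctly by (1). So it suffices to show $\Lambda|\widetilde C$ is a triangulation of $\widetilde C$ with rays through $\{x_1,\dots,x_{i-2},x_i\}$. Since $\dim C_i>\dim C_{i-1}$ means $x_i\notin\operatorname{span}(C_{i-1})$, placing $x_i$ produces a join with apex $x_i$: every cone of $\Lambda_{i-1}$ is visible, so $\Lambda_i$ is $\Lambda_{i-1}$ together with all $\cone(\sigma,x_i)$, and likewise $\widetilde\Lambda:=\Lambda(x_1,\dots,x_{i-2},x_i)$ is the join of $x_i$ with $\Lambda_{i-2}$. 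The heart of the matter is a lemma that coning with an apex outside the ambient span commutes with restriction: for $\widetilde C\subseteq C_i$ one gets $\Lambda_i|\widetilde C=x_i*(\Lambda_{i-1}|C_{i-2})=x_i*\Lambda_{i-2}=\widetilde\Lambda$, using $\Lambda_{i-1}|C_{i-2}=\Lambda_{i-2}$ from (1). Combined with $\Lambda|\widetilde C=(\Lambda|C_i)|\widetilde C=\Lambda_i|\widetilde C$, this yields $\Lambda|\widetilde C=\widetilde\Lambda$, and uniqueness for the swapped sequence gives the claim. I expect this join/restriction lemma to be the main obstacle, as it is the only genuinely geometric step and must be established uniformly whether or not $x_{i-1}$ itself raises the dimension (equivalently, whether $\widetilde C$ is a proper face of $C_i$ or full-dimensional in it); the clean route is the direct-sum decomposition $C_i\cong C_{i-1}\times\RR_+$ induced by $x_i\notin\operatorname{span}(C_{i-1})$, under which $\cone(\tau,x_i)\cap\widetilde C=\cone(\tau\cap C_{i-2},x_i)$.

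Finally, (4) follows from (3) by a bubble sort governed by the matroid greedy algorithm. The lexicographically smallest index vector $(i_1,\dots,i_d)$ is exactly the greedy basis: $i_k$ is the least index exceeding $i_{k-1}$ with $x_{i_k}\notin\operatorname{span}(x_1,\dots,x_{i_k-1})$, a subspace of dimension $k-1$. Processing $k=1,\dots,d$, I would move $x_{i_k}$ leftward into position $k$ by successive adjacent transpositions. At each transposition the element moving left has index $i_k$ while all elements currently preceding it have indices smaller than $i_k$, so their span lies in $\operatorname{span}(x_1,\dots,x_{i_k-1})$ and does not contain $x_{i_k}$; hence the right-hand element raises the dimension and (3) applies, leaving $\Lambda$ unchanged. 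Since only the basis vectors are pulled to the front and the remaining vectors retain their relative order, the process terminates with $\Lambda=\Lambda(x_{i_1},\dots,x_{i_d},x_{j_1},\dots,x_{j_{n-d}})$ and $j_1<\dots<j_{n-d}$, as required.
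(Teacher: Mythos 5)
Your proposal is correct, and for parts (1), (2) and (4) it follows essentially the paper's route: existence by construction plus uniqueness of the bounded-ray extension for (1), verification of the characterizing conditions of (1) for (2), and repeated transpositions justified by (3) for (4) (the paper compresses your greedy/bubble-sort bookkeeping into ``follows by repeated application of (3)''). The genuine divergence is in part (3), where you run the uniqueness argument in the opposite direction from the paper. The paper first reduces to $i=n$ and then checks that the \emph{swapped} triangulation $\Lambda(x_1,\dots,x_{n-2},x_n,x_{n-1})$ satisfies the conditions of (1) for the \emph{original} order; there the only critical restriction is to $C_{n-1}$, and the hypothesis $\dim C>\dim C_{n-1}$ makes $C_{n-1}$ a facet of $C$, so the restriction is automatically a triangulation (restriction to a face) and, by (2) applied to the swapped sequence, equals $\Lambda_{n-1}$ --- no further geometry is needed. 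You instead check that the \emph{original} $\Lambda$ satisfies the conditions of (1) for the \emph{swapped} order, which forces you to control the restriction to $\widetilde C=\cone(x_1,\dots,x_{i-2},x_i)$, a subcone that, as you correctly note, need not be a face of $C_i$ when $x_{i-1}$ does not raise the dimension; this is exactly why you need the join/direct-sum lemma ($C_i=C_{i-1}+\RR_+x_i$ with $\cone(\tau,x_i)\cap\widetilde C=\cone(\tau\cap C_{i-2},x_i)$), which is correct but is the heaviest step of your write-up. In short: the paper's direction exploits the fact that the swap hypothesis hands you a face on the side where restriction must be verified, so the previously stated face-restriction fact plus (2) do all the work; your direction is self-contained within (3) (it never invokes (2)) but pays with the extra geometric lemma and the transitivity-of-restriction points you flag --- both of which do hold in the instances you use them, at the same level of rigor at which the paper says ``one easily checks.''
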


\begin{proof}
(1) By construction it is clear that $\Lambda_n$ satisfies the
properties of which we claim that they determine $\Lambda$
uniquely. On the other hand, the extension of $\Lambda_{i-1}$
to a triangulation of $C_i$ is uniquely determined if one does
not introduce further rays: the triangulation of the part $V$
of the boundary of $C_{i-1}$ that is visible from $x_i$ has to
coincide with the restriction of $\Lambda_{i-1}$ to $V$.

(2) One easily checks that $\Lambda|F$ satisfies the conditions
in (1) that characterize $\Lambda(x_{i_1},\dots,\allowbreak
x_{i_m})$.

(3) It is enough to check the claim for $i=n$. Then the only
critical point for the conditions in (1) is whether
$\Lambda(x_1,\dots,x_{n-2},x_n,\allowbreak x_{n-1})$ restricts
to $C_{n-1}$. But this is the case since $C_{n-1}$ is a facet
of $C$ if $\dim C>\dim C_{n-1}$.

(4) follows by repeated application of (3).
\end{proof}

For the configuration of Figure \ref{figlex}, claim 4 of Proposition \ref{lex} says that we could have started with the triangle spanned by the points 1,2,4 and then added the other points in the given order.

In the following we will assume that $C$ is full dimensional:
$\dim C=d=\dim \RR^d$. Part (4) helps us to keep the data
structure of lexicographic triangulations simple: right from
the start we need only to work with the list of dimension $d$
simplicial cones of $\Lambda$ by searching
$x_{i_1},\dots,x_{i_d}$ first, choosing
$\cone(x_{i_1},\dots,x_{i_d})$ as the first $d$-dimensional
simplicial cone and subsequently extending the list as
prescribed by the definition of the lexicographic
triangulation. In other words, we can assume that
$x_1,\dots,x_d$ are linearly independent, and henceforth we
will do so.

In order to extend the triangulation we must of course know
which facets of $C_{i-1}$ are visible from $x_i$. Recall that a
cone $C$ of dimension $d$ in $\RR^d$ has a unique irredundant
representation as an intersection of linear halfspaces:
$$
C=\bigcap_{H\in\cH(C)} H^+,
$$
where $\cH(C)$ is a finite set of oriented hyperplanes and the
orientation of the closed half spaces $H^-$ and $H^+$ is chosen
in such a way that $C\subset H^+$ for $H\in\cH(C)$. For
$H\in\cH(C_{i-1})$ the facet $H\cap C_{i-1}$ is visible from
$x_i$ if and only if $x_i$ lies in the open halfspace
$H^<=H^-\setminus H$. When we refer to support hyperplane in
the following we always mean those that appear in the
irredundant decomposition of $C$ since only they are important
in the algorithmic context.

Hyperplanes are represented by linear forms
$\lambda\in(\RR^d)^*$, and we always work with the basis
$e_1^*,\dots,e_d^*$ that is dual to the basis $e_1,\dots,e_d$
of unit vectors. For rational hyperplanes the linear form
$\lambda$ can always be chosen in such a way that it has
integral coprime coefficients and satisfies $\lambda(x)\ge 0$
for $x\in C$. This choice determines $\lambda$ uniquely. (If
one identifies $e_1^*,\dots,e_d^*$ with $e_1,\dots,e_d$ via the
standard scalar product, then $\lambda$ is nothing but the
primitive integral inner (with respect to $C$) normal vector of
$H$.) For later use we define the \emph{(lattice) height} of
$x\in\RR^d$ over $H$ by
$$
\hht_H(x)=|\lambda(x)|.
$$
If $F=C\cap H$ is the facet of $C$ cut out by $H$, we set
$\hht_F(x)=\hht_H(x)$.

We can now describe the computation of the triangulation
$\Lambda(x_1,\dots,x_n)$ and the support hyperplanes in a more formal way by Algorithm
\ref{Lexincr}. For simplicity we will identify a simplicial
cone $\sigma$ with its generating set
$\subset\{x_1,\dots,x_n\}$. It should be clear from the context
what is meant. For a set $\cH$ of hyperplanes we set
$$
\cH^*(x)=\{H\in \cH, x\in H^*\}\qquad\text{where }*\in\{<,>,+,-\}.
$$
Further we introduce the notation
$$
\cH^*(C,x)=\{H\in \cH(C), x\in H^*\}\qquad\text{where }*\in\{<,>,+,-\}.
$$

The representation of hyperplanes by linear forms makes it easy to detect the visible facets: a facet is visible from $y$ if $\lambda(y)<0$ for the linear from $\lambda$ defining the hyperplane through the facet.
As pointed out above, in Algorithm \ref{Lexincr} and at several places below we may assume that the first $d$ elements of $x_1,\dots,x_n$ are linearly independent. This can always be achieved by rearranging the order of the elements, or by a refined bookkeeping (as done by Normaliz).

For its main data, Normaliz uses two types of data structures:
\begin{enumerate}
\item Lists and matrices of integer vectors. The vectors represent generators of cones, Hilbert basis elements etc.\ in $\RR^d$, or linear forms in $(\RR^d)^*$.
\item Lists  of subsets of the set $\{x_1,\dots,x_n\}$. Each subset stands for the subcone generated by its elements.
\end{enumerate}
Sometimes more complicated data structures are needed. For example, it is useful in Algorithm \ref{Lexincr} to store the incidence relation of generators and facets.

\begin{algorithm}[hbt]
  \begin{algorithmic}[1]
	\Require{A generating set $x_1,\dots,x_n$ of a rational cone $C$ of dimension $d$}
	\Ensure{The support hyperplanes $\cH$ of $C$ and the triangulation $\Lambda(x_1,\dots,x_n)$}
  \Function{LexTriangulation}{$x_1,\dots,x_n$}
    \State{$\Delta \gets \{\cone(x_1,\dots,x_d)\}$}
    \State{$\cH\gets\cH(\cone(x_1,\dots,x_d))$}
    \For{$i \gets d+1$ \textbf{to} $n$}
      \State{$\Delta\gets$\Call{ExtendTri}{$\cH,\Delta,x_i$}}
      \State{$\cH\gets$\Call{FindNewHyp}{$\cH,x_1,\dots,x_i$}}
    \EndFor
    \State{\Return{$(\cH,\Delta)$}}
  \EndFunction
  \end{algorithmic}
  \begin{algorithmic}[1]
  	\Require{A set of hyperplanes $\cH$, a triangulation $\Delta$ and a point $y$}
  	\Ensure{The union of $\Delta$ with the set of simplicial cones spanned by $y$ and the  facets $\delta$ of the $\sigma\in\Delta$ such that $\delta\subset H$ for some $H\in\cH$ with $y\in H^<$ }
  \Function{ExtendTri}{$\cH,\Delta,y$}
  \ParFor{$H\in \cH^<(y)$}
    \For{$\sigma\in \Delta$}
      \If{$|\sigma\cap H|=d-1$}
        \State{$\Delta \gets \Delta\cup \{\cone(y,\sigma\cap H)\}$}
      \EndIf
    \EndFor
  \EndParFor
  \State{\Return $\Delta$}
  \EndFunction
  \end{algorithmic}
\caption{Incremental building of cone, support hyperplanes  and lexicographic triangulation}
\label{Lexincr}
\end{algorithm}

In the following discussion we set $C_j=\cone(x_1,\dots,x_j)$ as above. The support hyperplanes of the first simplicial cone $C_d$ in line 3 are computed by essentially inverting the matrix of the
generators $x_1,\dots,x_d$ (see equation \eqref{invert} in Section \ref{Eval}). The function \textsc{FindNewHyp} computes $\cH(C_i)$ from
$\cH(C_{i-1})$ by Fourier-Motzkin elimination. (It does nothing
if $x_i\in C_{i-1}$.) Its Normaliz implementation has been
described in great detail in \cite{BI}; therefore we skip it
here, but will come back to it below when we outline its combination with pyramid decomposition. The function \textsc{ExtendTri} does exactly what its
name says: it extends the triangulation
$\Lambda(x_1,\dots,x_{i-1})$  of $C_{i-1}$ to the triangulation
$\Lambda(x_1,\dots,x_{i})$ of $C_i$ (again doing nothing if
$x_i\in C_{i-1}$).

One is tempted to improve \textsc{ExtendTri} by better
bookkeeping and using extra information on triangulations of
cones. We discuss our more or less fruitless attempts in the
following remark.

\begin{remark} (a) If one knows the restriction of
$\Lambda(x_1,\dots,x_{i-1})$ to the facets of
$C_{i-1}$, then $\Lambda(x_1,\dots,x_{i})$ can be computed very
fast. However, unless $i=n$, the facet triangulation must now
be extended to the facets of $C_i$, and this step eats up the
previous gain, as experiments have shown, at least for the
relatively small triangulations to which \textsc{ExtendTri} is
really applied after the pyramid decomposition described below.

(b) The test of the condition $|\sigma\cap H|=d-1$ is positive
if and only if $d-1$ of the generators of $\sigma$ lie in $H$.
Its verification can be accelerated if one knows which facets
of the $d$-dimensional cones in $\Lambda(x_1,\dots,x_{i-1})$
are already shared by another simplicial cone in
$\Lambda(x_1,\dots,x_{i-1})$, and are therefore not available
for the formation of a new simplicial cone. But the extra
bookkeeping requires more time than is gained by its use.

(c) One refinement is used in our implementation, though its
influence is almost unmeasurable. Each simplicial cone in
$\Lambda(x_1,\dots,x_{i-1})$ has been added with a certain
generator $x_j$, $j<i$. (The first cone is considered to be
added with each of its generators.) It is not hard to see that
only those simplicial cones that have been added with a
generator $x_j\in H$ can satisfy the condition $|\sigma\cap
H|=d-1$, and this information is used to reduce the number of
pairs $(H,\sigma)$ to be tested.

(d) If $|H\cap \{x_1,\dots,x_{i-1}\}|=d-1$, then
$H\in\cH^<(C_{i-1},x_i)$ produces exactly one new simplicial
cone of dimension $d$, namely
$\cone(x_i,H\cap\{x_1,\dots,x_{i-1}\})$, and therefore the loop
over $\sigma$ can be suppressed.
\end{remark}

The product $|\cH^<(C_{i-1},x_i)|\cdot|\Lambda(x_1,\dots,x_{i-1})|$ determines the
complexity of \textsc{ExtendTri}. Even though the loop over $H$
is parallelized (as indicated by \textbf{parallel for}), the
time spent in \textsc{ExtendTri} can be very long. (The
``exterior'' loops in \textsc{FindNewHyp} are parallelized as
well.) The second limiting factor for \textsc{ExtendTri} is
memory: it is already difficult to store triangulations of size
$10^8$ and impossible for size $\ge10^9$. Therefore the direct
approach to lexicographic triangulations does not work for
truly large cones.

\begin{remark}
The computation time for the Fourier-Motzkin elimination and the lexicographic triangulation often depends significantly on the order of the generators. If only the support hyperplanes must be computed, Normaliz orders the input vectors lexicographically. If also the triangulation must be computed, the input vectors are first sorted by their $L_1$-norm, or by degree if a grading is defined (see Section \ref{Eval}), and second lexicographically. The sorting by $L_1$-norm or degree helps to keep the determinants of the simplicial cones small (see Section \ref{Eval}).  On the whole, we have reached good results with this order.
\end{remark}

\begin{remark}
Whenever possible, each parallel thread started in a Normaliz computation collects its computation results and returns them to the calling routine after its completion. In this way, the amount of synchronization between the threads is reduced to a minimum. For example, in \textsc{ExtendTri}, the new simplicial cones $\cone(y,\sigma\cap H)$ can be collected independently of each other: they are not directly added to the global list $\Delta$ in line 9, but are first stored in a list owned by the thread, and then spliced into $\Delta$ at the end of \textsc{ExtendTri}. 	
\end{remark}

\subsection{Pyramid decomposition}

Now we present a radically different way to lexicographic
triangulations via iterated \emph{pyramid decompositions}. The
cones that appear in this type of decomposition are called
\emph{pyramids} since their cross-section polytopes are
pyramids in the usual sense, namely of type $\conv(F,x)$ where
$F$ is a facet and $x$ is a vertex not contained in $F$.

\begin{definition}
The \emph{pyramid decomposition} $\Pi(x_1,\dots,x_n)$ of
$C=\cone(x_1,\dots,x_n)$ is recursively defined as follows: it
is the trivial decomposition for $n=0$, and
\begin{multline*}
\Pi(x_1,\dots,x_n)=\Pi(x_1,\dots,x_{n-1})\cup \{
\cone(F,x_n):\\ F \text{ a face of $\cone(x_1,\dots,x_{n-1})$
visible from }x_n\}.
\end{multline*}
\end{definition}

As already pointed out in the introduction, the pyramid decomposition is
not a polyhedral
subdivision in the strong sense: the intersection of two faces
$F$ and $F'$ need not be a common face of $F$ and $F'$ (but is
always a face of $F$ or $F'$). See Figures \ref{FigPyrDec} and \ref{FigPyrDec2} for
examples. Roughly speaking, one can say that in the pyramid decomposition forgets the potentially existing subdivision (or even triangulation) of the facets of $C(x_1,\dots,x_{n-1}$ that are visible from $x_n$. In order to subdivide (or even triangulate)  the new pyramids it is enough to do the computations within each of them. This ``localization'' reduces the complexity tremendously.

In order to iterate the pyramid decomposition we set
$\Pi^0(x_1,\dots,x_n)=\Pi(x_1,\dots,x_n)$, and
$$
\Pi^k(x_1,\dots,x_n) =\bigcup_{P\in\Pi^{k-1}(x_1,\dots,x_n)}\{ \Pi(x_i: x_i\in
P)\}\qquad\text{for }k>0.
$$
We now assume that the first $d$ vectors in the generating set of the top cone and each of its pyramids are linearly independent. Because of Proposition \ref{lex}, claim 4, this assumption does not endanger the compatibility with lexicographic triangulation. Under this
assumption the recursion defining $\Pi^k$ cannot descend indefinitely, since the
number of generators goes down with each recursion  level. We
denote the \emph{total pyramid decomposition} by
$\Pi^\infty(x_1,\dots,\allowbreak x_n)$.
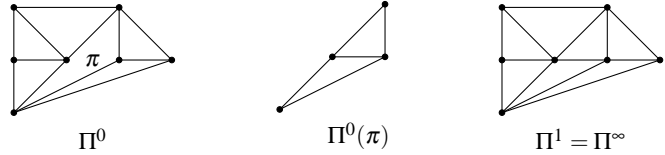
\begin{figure}[hbt]
\begin{center}
\begin{small}
\begin{tikzpicture}
[scale=0.7,auto=left]
\foreach \x/\y in {0/0, 0/1, 0/2, 1/1, 2/1, 2/2, 3/1}
{ \filldraw[fill=black] (\x,\y)  circle (1.5pt);}
\draw (0,0) -- (0,1) node at (1.5,-0.5){$\Pi^0$};
\draw (0,1) -- (0,2) node at (1.5,1){$\pi$};
\draw (1,1) -- (0,0);
\draw (1,1) -- (0,2);
\draw (2,1) -- (0,0);
\draw (2,2) -- (1,1);
\draw (2,2) -- (2,1);
\draw (2,2) -- (0,2);
\draw (3,1) -- (2,2);
\draw (3,1) -- (0,0);
\draw (3,1) -- (2,1);
\draw (0,1) -- (1,1);
\end{tikzpicture}
\qquad\qquad
\begin{tikzpicture}
[scale=0.7,auto=left]
\foreach \x/\y in {0/0, 1/1, 2/1, 2/2}
{ \filldraw[fill=black] (\x,\y)  circle (1.5pt);}
\draw (2,1) -- (0,0) node at (1.5,-0.5){$\Pi^0(\pi)$};
\draw (2,2) -- (1,1);
\draw (2,2) -- (2,1);
\draw (1,1) -- (0,0);
\draw (1,1) -- (2,1);
\end{tikzpicture}
\qquad\qquad
\begin{tikzpicture}
[scale=0.7,auto=left]
\foreach \x/\y in {0/0, 0/1, 0/2, 1/1, 2/1, 2/2, 3/1}
{ \filldraw[fill=black] (\x,\y)  circle (1.5pt);}
\draw (0,0) -- (0,1) node at (1.5,-0.5){$\Pi^1=\Pi^\infty$};
\draw (0,1) -- (0,2);
\draw (1,1) -- (0,1);
\draw (1,1) -- (0,0);
\draw (1,1) -- (0,2);
\draw (2,1) -- (1,1);
\draw (2,1) -- (0,0);
\draw (2,2) -- (1,1);
\draw (2,2) -- (2,1);
\draw (2,2) -- (0,2);
\draw (3,1) -- (2,2);
\draw (3,1) -- (0,0);
\draw (3,1) -- (2,1);
\end{tikzpicture}
\end{small}
\end{center}
\caption{Pyramid decomposition of the point configuration of Figure \ref{figlex}}\label{FigPyrDec}
\end{figure}

\begin{figure}[hbt]
\begin{center}
\begin{tikzpicture}
\coordinate[] (A) at (0,0);
\coordinate[] (B) at (0.5,3);
\coordinate[] (C) at (1.2,1.7);
\coordinate[] (D) at (2.5,1.4);
\coordinate[] (E) at (4,1.6);
\coordinate[] (F) at (1.3,0);

\draw[thick, draw=black, fill=medium] (B) -- (F) -- (C);
\draw[thick, draw=black, fill=veryverylight] (C) -- (F) -- (D);
\draw[thick, draw=black, fill=verylight] (C) -- (B) -- (D) -- cycle;
\draw[thick, dashed] (B) -- (A);
\draw[thick, dashed] (C) -- (A);
\draw[thick, dashed] (F) -- (A);
\fill (A) circle (2pt);
\fill (B) circle (2pt);
\fill (C) circle (2pt);
\fill (D) circle (2pt);
\fill (F) circle (2pt);
\end{tikzpicture}
\qquad\qquad
\begin{tikzpicture}
\coordinate[] (A) at (0,0);
\coordinate[] (B) at (0.5,3);
\coordinate[] (D) at (2.5,1.4);
\coordinate[] (C) at (1.2,1.7);
\coordinate[] (E) at (4,1.6);
\coordinate[] (F) at (1.3,0);

\draw[thick, fill=medium] (A) -- (B) -- (C);
\draw[thick, draw=black, fill=verylight] (C) -- (B) -- (D) -- cycle;
\draw[thick, fill=veryverylight] (F) -- (A) -- (C) -- (D) -- cycle;

\draw[thick, dashed] (B)--(E);
\draw[thick, dashed] (C)--(E);
\draw[thick, dashed] (D)--(E);
\draw[thick, dashed] (A)--(E);
\draw[thick, dashed] (F)--(E);
\fill (A) circle (2pt);
\fill (B) circle (2pt);
\fill (C) circle (2pt);
\fill (D) circle (2pt);
\fill (E) circle (2pt);
\fill (F) circle (2pt);
\end{tikzpicture}
\end{center}
\caption{Pyramid decomposition of Figure \ref{FigLex2} }\label{FigPyrDec2}
\end{figure}
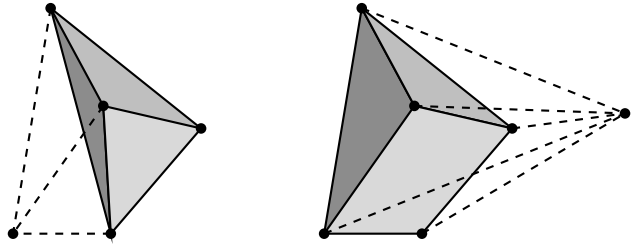

\begin{proposition}
One has
$\Pi^\infty(x_1,\dots,x_n)=\Pi^{n-d}(x_1,\dots,x_n)=\Lambda(x_1,\dots,x_n)$.
\end{proposition}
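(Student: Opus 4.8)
The plan is to prove the chain of equalities by first establishing that the pyramid decomposition, when iterated to completion, stabilizes at a simplicial subdivision, and then identifying that subdivision with the lexicographic triangulation via the uniqueness characterization in Proposition \ref{lex}(1). I would proceed in three stages: first bound the recursion depth to get $\Pi^\infty=\Pi^{n-d}$; second show $\Pi^\infty$ consists only of simplicial cones (so it is at least a candidate triangulation); and third verify it satisfies the defining properties of $\Lambda$.

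For the depth bound, the key observation is that each pyramid $\cone(F,x_n)$ produced in one step of $\Pi$ has one fewer ``free'' generator available than its parent, in the sense that once $x_n$ is coned over a face $F$, the generator $x_n$ becomes a fixed apex and only the generators spanning $F$ remain to be further decomposed. More carefully, I would argue by induction that after $k$ iterations every cone in $\Pi^k(x_1,\dots,x_n)$ either is simplicial or is generated by at most $n-k$ of the $x_i$. Since a full-dimensional cone needs at least $d$ generators, the process cannot produce a strictly non-simplicial cone once $n-k<d+1$, i.e. once $k\ge n-d$; hence $\Pi^{n-d}$ is already stable and equals $\Pi^\infty$. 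The bookkeeping here is routine, so I would not belabor the exact count, but the essential point is that the generator set shrinks by at least one per level along any branch that is not yet simplicial.

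To identify $\Pi^\infty$ with $\Lambda$, I would invoke Proposition \ref{lex}(1): it suffices to check that $\Pi^\infty(x_1,\dots,x_n)$ is a triangulation with rays through a subset of $\{x_1,\dots,x_n\}$ that restricts to a triangulation of each $C_i$ and whose restriction $\Pi^\infty|C_i$ has rays through a subset of $\{x_1,\dots,x_i\}$. The ray condition is immediate from the construction, since every cone is generated by a subset of the $x_i$. For the restriction property I would argue by induction on $n$, exploiting the recursive definition $\Pi(x_1,\dots,x_n)=\Pi(x_1,\dots,x_{n-1})\cup\{\cone(F,x_n)\}$: the pyramids over faces $F$ of $C_{n-1}$ visible from $x_n$, once fully decomposed, tile the region $C_n\setminus C_{n-1}$ compatibly with the already-constructed decomposition of $C_{n-1}$, exactly as in the inductive step establishing uniqueness in the proof of Proposition \ref{lex}(1). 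The crucial subtlety is that although the level-$0$ decomposition $\Pi$ is \emph{not} a genuine polyhedral subdivision (faces may overlap improperly, as the excerpt stresses and Figure \ref{FigPyrDec} illustrates), this defect must be shown to disappear after full iteration.

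The hard part will be precisely this last point: proving that the overlaps present at intermediate levels are resolved by the time one reaches $\Pi^\infty$, so that distinct maximal cones meet in common faces. I expect the cleanest route is to show directly that $\Pi^\infty$ and $\Lambda$ consist of the same maximal simplicial cones, rather than checking the subdivision axioms for $\Pi^\infty$ from scratch. Concretely, I would prove that decomposing a pyramid $\cone(F,x_n)$ by the total pyramid decomposition of its own generators yields exactly the simplices $\cone(\sigma,x_n)$ for $\sigma$ in the lexicographic triangulation of $F$ — which matches the recursive clause defining $\Lambda$, namely $\Lambda(x_1,\dots,x_n)=\Lambda(x_1,\dots,x_{n-1})\cup\{\cone(\sigma,x_n):\sigma\in\Lambda(x_1,\dots,x_{n-1})\text{ visible from }x_n\}$. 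Combined with Proposition \ref{lex}(2), which says $\Lambda$ restricts to the lexicographic triangulation on each face, this sets up a clean double induction (on $n$ and on the recursion depth) identifying the two total decompositions cone-by-cone, and the improper overlaps simply never survive into the final simplicial result.
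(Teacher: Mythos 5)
Your proposal is correct and follows essentially the same route as the paper's proof: induction on the number of generators (every non-simplicial pyramid of $\Pi$ has strictly fewer than its parent), identification of the total decomposition of each pyramid $\cone(F,x_n)$ with the lexicographic triangulation of its generators, and the use of Proposition \ref{lex}(2) to match these triangulations along the pyramid boundaries together with Proposition \ref{lex}(1) to conclude equality with $\Lambda(x_1,\dots,x_n)$. Your explicit generator-counting argument for $\Pi^\infty=\Pi^{n-d}$ spells out a bound the paper's proof leaves implicit, but it is the same idea, not a different approach.
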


\begin{proof}
In the case $n=d$, the pyramid decomposition is obviously the
face lattice of $C$, and therefore coincides with the
lexicographic triangulation. For $n>d$ the first full
dimensional pyramid reached is the simplicial cone
$\cone(x_1,\dots,x_d)$. All the other pyramids have at most
$n-1$ generators, and so we can use induction: For each $P\in
\Pi(x_1,\dots,x_n)$ the total pyramid decomposition of $P$ is
the lexicographic triangulation $\Lambda(x_i: x_i\in P)$.
According to Proposition \ref{lex}(2) these triangulations
match along the common boundaries of the pyramids, and
therefore constitute a triangulation of $C$. It evidently
satisfies the conditions in Proposition \ref{lex}(1).
\end{proof}

This  leads to a recursive computation of
$\Lambda(x_1,\dots,x_n)$ by the functions in Algorithm~\ref{TotalPyrDec}.

\begin{algorithm}[hbt]
  \begin{algorithmic}[1]
  \Require{A generating set $x_1,\dots,x_n$ of a rational cone $C$ of dimension $d$}
  \Ensure{The support hyperplanes $\cH$ and of $C$ and the triangulation $\Lambda(x_1,\dots,x_n)$}
    \Function{TotalPyrDec}{$x_1,\dots,x_n$}
      \State{$\Delta\gets\{\cone(x_1,\dots,x_d)\}$}
      \State{$\cH\gets\cH(\cone(x_1,\dots,x_d))$}
      \For{$i \gets d+1$ \textbf{to} $n$}
        \State{($\cG,\Sigma)\gets$\Call{ProcessPyrsRec}{$\cH,x_1,\dots,x_i$}}
        \State{$\cH \gets (\cH\cup \cG)\setminus \cH^<(x_i)$}
        \State{$\Delta \gets \Delta\cup\Sigma$}
      \EndFor
	  \State{\Return{$(\cH,\Delta$)}}
    \EndFunction
  \end{algorithmic}

  \begin{algorithmic}[1]
     \Require{A generating set $x_1,\dots,x_i$ of a rational cone $C$ and the support hyperplanes $\cH=\cH(\cone(x_1,\dots,x_{i-1})$}
     \Ensure{The support hyperplanes $\cH(x_1,\dots,x_n)\setminus\cH(x_1,\dots,x_{n-1})$ and  the triangulation $\Lambda(x_1,\dots,x_n)\setminus \Lambda(x_1,\dots,x_{n-1})$}
  	\Function{ProcessPyrsRec}{$\cH,x_1,\dots,x_n$}
  	\State{$\Delta\gets\emptyset$}
  	\State{$\cG\gets\emptyset$}
  	\ParFor{$H\in \cH^<(x_n)$}
  	\State{$key \gets \{x_n\}\cup(\{x_1,\dots,x_{n-1}\}\cap H)$}
  	\State{$(\cK,\Sigma)\gets$\Call{TotalPyrDec}{\textit{key}}}
  	\State{$\cG\gets \cG\cup \{G\in\cK:G\in\cH(\cone(x_1,\dots,x_n))\}$}
  	\State{$\Delta \gets \Delta\cup\Sigma$}
  	\EndParFor
  	\State{\Return{$(\cG,\Delta$)}}
  	\EndFunction
  \end{algorithmic}
\caption{Incremental building of cone, support hyperplanes and lexicographic triangulation by total pyramid decomposition}\label{TotalPyrDec}
\end{algorithm}

When called with the arguments $x_1,\dots,x_n$, the function \textsc{TotalPyrDec} builds $\Pi^\infty(x_1,\dots,x_n)$ (represented by its full dimensional members). As in Algorithm \ref{Lexincr}, the support hyperplanes of the simplicial cone $C_d$ in line 3 are computed by the inversion of the generator matrix. All further support hyperplanes are given back to $C_n$ by its ``daughters'' in line 6 where we also discard the support hyperplanes of $C_{n-1}$ that have $x_i$ in their negative half space.

The function \textsc{ProcessPyrsRec} manages the recursion that defines $\Pi^\infty(x_1,\dots,\allowbreak x_n)$. In its line 7 we must decide which support hyperplanes $G$ of the daughter pyramid $\cone(\textit{key})$ are ``new'' support hyperplanes of the mother $C_n=\cone(x_1,\dots,x_n)$. We use the following criteria:
\begin{enumerate}
	\item[(i)] $G \in\cH(C_n)\iff x_j\in G^+$ for $j=1,\dots,n-1$;
	\item[(ii)] $G\notin
	\cH(C_{n-1})\iff x_j\in G^>$ for all $j=1,\dots,i-1$ such that 	$x_j\notin \textit{key}$.
\end{enumerate}

One should note that pyramids effectively reduce the dimension:
the complexity of $\cone(F,x_n)$ is completely determined by
the facet $F$, which has dimension $d-1$.

While pyramid decomposition has primarily been developed for the computation of triangulations, it is also very useful in the computation of support hyperplanes. For Fourier-Motzkin elimination the critical complexity parameter
is $|\cH^<(C_{i-1},x_i)|\cdot|\cH^>(C_{i-1},x_i)|$, and as in
its use for triangulation, pyramid decomposition lets us
replace a potentially very large product of the sizes of two ``global''
lists by a sum of small ``local'' products--the price to be
paid is the computational waste invested for the support
hyperplanes of the pyramids that are useless later on.

While being very memory efficient, total pyramid
decomposition in the na\"{i}ve implementation of Algorithm \ref{TotalPyrDec} is sometimes slower and sometimes faster than using Fourier-Motzkin elimination and building the
lexicographic triangulation directly. The best solution is a hybrid algorithm that combines pyramid decomposition and lexicographic triangulation. It will be descried in the next section where we will also compare computation times and memory usage of pure lexicographic triangulation, pure pyramid decomposition and the hybrid algorithm. We compare computation times in Section \ref{comp_times}.

\section{The current implementation}\label{Curr}

\subsection{The hybrid algorithm}

Roughly speaking, the hybrid algorithm switches from Fourier-Motzkin elimination and lexicographic triangulation to pyramid decomposition for hyperplanes and triangulation when certain complexity parameters are exceeded. This strategy is realized by the function \textsc{BuildCone} of Algorithm \ref{NewBuildCone}.

\begin{algorithm}[hbt]
\begin{algorithmic}[1]
\Require{A generating set $x_1,\dots,x_n$ of a rational cone $C$ of dimension $d$}. The top cone has an initially empty list  $\Pi$ of pyramids.
\Ensure{The support hyperplanes $\cH$ and of $C$ and the triangulation $\Lambda(x_1,\dots,x_n)$}
\Function{BuildCone}{$x_1,\dots,x_n$}
      \State{$\Delta\gets\{\cone(x_1,\dots,x_d)\}$}
      \State{$\cH\gets\cH(\cone(x_1,\dots,x_d))$}
  \For{$i \gets d+1$ \textbf{to} $n$}
    \If{\textit{MakePyramidsForHyps}}
        \State{($\cG,\Sigma)\gets$\Call{ProcessPyrsRec}{$\cH,x_1,\dots,x_i$}}
        \State{$\cH \gets (\cH\cup \cG)\setminus \cH^<(x_i)$}
        \State{$\Delta \gets \Delta\cup\Sigma$}
    \Else
      \If{\textit{MakePyramidsForTri}}
          \For{$H\in \cH^<(\cH, x_i)$}
          \State{$key \gets \{x_i\}\cup(\{x_1,\dots,x_{i-1}\}\cap H)$}
          \State{$\Pi\gets\Pi\cup\{\textit{key} \}$}
          \EndFor
      \Else
        \State{$\Delta\gets$\Call{ExtendTri}{$\cH,\Delta,x_i$}}
      \EndIf
      \State{$\cH\gets$\Call{FindNewHyp}{$\cH,x_1,\dots,x_i$}}
    \EndIf
  \EndFor
\If{\textit{TopCone}}
  \ParFor{$P\in \Pi$}
  \State{\Call{buildCone}{$P$}}
  \State{$\Pi\gets\Pi\setminus\{P\}$}
  \EndParFor
\EndIf
\State{\Return $(\cH,\Delta)$}

\EndFunction
 \end{algorithmic}
\caption{Incremental building of cone, support hyperplanes and lexicographic triangulation by a hybrid algorithm}\label{NewBuildCone}
\end{algorithm}

The boolean \textit{MakePyramidsForHyps} (line 5) is determined by a single condition:
\begin{itemize}
\item[] it is set to \textit{true} if the complexity parameter
    $|\cH^<(C_{i-1},x_i)|\cdot|\cH^>(C_{i-1},x_i)|$ exceeds
    a threshold, and to \textit{false} otherwise.
\end{itemize}
As the name \textit{MakePyramidsForHyps} indicates,thew computation of support hyperplanes is transferred to the pyramids over the hyperplanes $\cH^<(x_i)$ if the complexity parameter is exceeded. Pyramids created for the computation of support hyperplanes must be treated very carefully since the mother cone must wait for the computation of their support hyperplanes. We come back to this point below.

The \textit{MakePyramidsForTri} (line 10) combines three conditions:
\begin{enumerate}
\item while set to \textit{false} initially, it remains
    \textit{true} once  once the switch to pyramids has been done in line 5 or line 10;
\item it is set \textit{true} if the complexity parameter
    $|\cH^<(C_{i-1},x_i)|\cdot|\Delta|$ exceeds a
    threshold;
\item it is set \textit{true} if the memory protection
    threshold is exceeded.
\end{enumerate}
The last point needs to be explained. \textsc{BuildCone} is not
only called for the processing of the top cone $C$, but also
for the parallelized processing of pyramids. Since
each of the ``parallel'' pyramids produces simplicial cones,
the buffer in which the simplicial cones are collected for
evaluation, may be severely overrun without condition (3),
especially if $|\cH^<(x_i)|$ is small, and therefore
condition (2) is reached only for large
$|\Lambda(x_1,\dots,x_{i-1})|$.

Pyramids that are created for triangulation can simply be stored since their triangulation is not needed for the continuation of the pyramid decomposition. Line 13 of \textsc{BuildCone} therefore  adds them to the pyramid list $\Pi$ which is part of the data of the top cone. The stored pyramids are evaluated after the top cone has been completely built (lines 17--20). It is a crucial aspect of pyramid decomposition that the loop in lines 18--20 is parallelized: the evaluation of a pyramid is a completely independent computation.

In the triangulation of the stored pyramids, new daughter pyramids may be created and added to the list. However, the number of pyramids is is bounded by $|\Lambda(x_1,\dots,x_n)|$. At its termination,\textsc{BuildCone} returns the support hyperplanes of the top cone and the lexicographic triangulation $\Lambda(x_1,\dots,x_n)$.

Algorithm \ref{NewBuildCone} is only a structural model of the actual implementation. Some of its technical details will be described below.

\subsection{Pyramids for support hyperplanes}

Pyramids that have been created because of the complexity of Fourier-Motzkin elimination are treated by the function \textsc{ProcessPyrsRec}. The \textsc{Rec} in its name indicates that the computation of the mother cone must wait for the completion of the daughter pyramid, at least for its support hyperplanes.
\begin{algorithm}[hbt]
  \begin{algorithmic}[1]
  	\Require{A generating set $x_1,\dots,x_i$ of a rational cone $C$ and the support hyperplanes $\cH=\cH(\cone(x_1,\dots,x_{i-1}))$}
  	\Ensure{The support hyperplanes $\cH(x_1,\dots,x_i)\setminus\cH(x_1,\dots,x_{i-1})$ and  part of the triangulation $\Lambda(x_1,\dots,x_i)\setminus \Lambda(x_1,\dots,x_{i-1})$}
  	\Function{ProcessPyrsRec}{$\cH,x_1,\dots,x_i$}
  	\State{$\Delta\gets\emptyset$}
  	\State{$\cG\gets\emptyset$}
  	\ParFor{$H\in \cH^<(x_i)$}
  	\State{$key \gets \{x_i\}\cup(\{x_1,\dots,x_{i-1}\}\cap H)$}
  \If{\textit{Small}}
  	\State{$(\cK,\Sigma)\gets$\Call{BuildCone}{\textit{key}}}
  	\State{$\cG\gets \cG\cup \{G\in\cK:G\in\cH(\cone(x_1,\dots,x_i))\}$}
  	\State{$\Delta \gets \Delta\cup\Sigma$}
  \Else
      \State{$\cG\gets \cG\ \cup$ \Call{MatchWitPosHyps}{$H,\cH,x_1.\dots,x_i$}  }
      \State{$\Pi\gets\Pi\cup\{\textit{key} \}$}
  \EndIf
  	\EndParFor
  	\State{\Return{$(\cG,\Delta$)}}
  	\EndFunction
  \end{algorithmic}
\caption{Processing of pyramids towards support hyperplanes and triangulation of mother cone}\label{ProcPyrsRec}
\end{algorithm}

The function is similar to the function \textsc{ProcessPyrsRec} in Algorithm \ref{TotalPyrDec}, except that we now distinguish between ``small'' and ``large'' pyramids. Small pyramids are treated recursively as in the total pyramid decomposition, namely by applying \textsc{BuildCone } to them. The treatment of large pyramids differs in two ways:
\begin{enumerate}
\item the triangulation of the pyramid is deferred;
\item the Fourier-Motzkin step \textsc{MatchWitPosHyps} is used to find the support hyperplanes of the mother cone that originate from $H$.
\end{enumerate}

The criterion for \textit{small} is based on a comparison of the expected computation times for (i) building the pyramid over $H$ and (ii) the Fourier-Motzkin step in which $H$ is ``matched'' with the hyperplanes $G\in \cH^>(x_i)$; see \cite{BI}. This refinement was the last step added to the processing of pyramids. It is irrelevant in sequential computations, but large pyramids previously had the tendency to significantly delay the completion of the parallelized loop in line 4.

\subsection{Interruption strategy}

Normaliz keeps all data in in RAM. Therefore it is necessary to control the size of the lists that contain simplicial cones and pyramids. This is achieved by a strategy that interrupts the production of pyramids and simplicial cones at suitable points as soon as the lists sizes have exceeded a preset value. The choice of the interruption points must take into consideration that Normaliz avoids nested
parallelization for efficiency. (This is the default choice of OpenMP.)

As soon as \textsc{BuildCone} switches to pyramids, the
triangulation $\Lambda(x_1,\dots,x_{i-1})$ is no longer needed for further extension. Therefore it is shipped to the
evaluation buffer. The simplicial cones are evaluated and the buffer is emptied whenever it has
exceeded its preset size and program flow allows its
parallelized evaluation.

The strategy for the evaluation of pyramids is similar, but it takes into account the recursive nature of the pyramid decomposition. The pyramid list is actually split into levels, and pyramids of level $i$ produce subpyramids of level $i+1$. If the number of level $i+1$ pyramids becomes too large, the production at level $i$ is interrupted in favor of the processing of the level $i+1$ pyramids.

\subsection{Partial triangulation}\label{partial}

The idea of pyramid
decomposition was born when the authors observed that the
computation of Hilbert bases in principle does not need a full
triangulation of $C$. If a simplicial cone $\sigma$ cannot
contribute new candidates for the Hilbert basis of $C$, it need
not be evaluated, and if a pyramid consists only of such
simplicial cones, it need not be triangulated at all. This is
the case if $\hht_H(x_i)=1$.

The resulting strategy has sometimes striking results and was
already described in~\cite{BHIKS}.

\subsection{Computation times}\label{comp_times}

Section \ref{Comp} contains extensive data on the performance of Normaliz. The computation times listed there include the evaluation of the simplicial cones for Hilbert bases and Hilbert series using the hybrid algorithm.

Here we want to compare lexicographic triangulation/Fourier-Motzkin elimination, pure pyramid decomposition and the hybrid algorithm in the computation of triangulations and support hyperplanes and triangulations, excluding any evaluation. (Normaliz can be restricted to these tasks.) The sources of the test input files pf Table \ref{data_pyrdec} are listed in Section \ref{Comp} where we give computation times for a large number of examples. The times reported in this section were taken on a SUN xFire 4450 with with 4 Intel
Xeon X7460 (a total of 24 cores running at 2.66 GHz) and 128 GB RAM.

\begin{small}
\begin{table}[hbt]
\centering
\begin{tabular}{|l|r|r|r|r|r|}\hline
\rule[-0.1ex]{0ex}{2.5ex}Input& edim & rank & $\#$ext & $\#$supp& $\#$ triangulation\\ \hline
\strut \ttt{CondPar} & 24 & 24 & 234 & 27 & 1,344,671 \\ \hline
\strut \ttt{5x5}     & 25 & 15 & 1,940 & 25 & 14,615,011\\ \hline
\strut \ttt{lo6}     & 16 & 16 & 720 & 910 & 5,796,124,824 \\ \hline
\strut \ttt{cyclo60} & 17 & 17 & 60 & 656,100 & 11,741,300 \\ \hline
\strut \ttt{A443}    & 40 & 30 & 48 & 4,948 & 2,654,272 \\ \hline
\strut \ttt{A543}    & 47 & 36 & 60 & 29,387  & 102,538,890  \\ \hline
\strut \ttt{A553}    & 55 & 43 & 75 & 306,955 & 9,248,466,183 \\ \hline
\end{tabular}
\vspace*{2ex} \caption{Numerical data of test
examples}\label{data_pyrdec}
\end{table}
\end{small}

As Table \ref{data_tri} shows, the hybrid algorithm is far superior to lexicographic triangulation as soon as the triangulations are large enough to have pyramids really built. Moreover, the need of storing the whole triangulation in RAM limits the applicability of lexicographic triangulation to sizes of $\approx 10^8$: \ttt{A543} needs already $21$ GB of RAM, and therefore \ttt{lo6} and \ttt{A553} cannot be computed by it, even if one is willing to wait for a very long time. The RAM needed by the hybrid algorithm is essentially determined by the fact that Normaliz collects $2.5 \cdot 10^6$ simplicial cones for parallelized evaluation, and is typically between $500$ MB and $1$ GB.

When the number of support hyperplanes is very large relative to the triangulation size, as for \ttt{cyclo60}, total pyramid decomposition is much better than lexicographic triangulation and can compete with the hybrid algorithm. This is not surprising since the pyramids built by the hybrid algorithm are close to being simplicial. The efficiency of parallelization depends on the use of \textsc{ProcessPyrsRec}: the dependence of the mother on the daughters limits the gain by parallelization.

\begin{small}
\begin{table}[hbt]
\centering
\begin{tabular}{|l|r|r|r|r|}\hline
\rule[-0.1ex]{0ex}{2.5ex}Input&threads&lex triang&total pyr dec& hybrid\\ \hline
\strut \ttt{CondPar} & 1& 15.8 s & 2:06 m & 3.0 s\\ \hline
& 20& 10.5 s& 1:20 m & 2.8 s \\ \hline
\strut \ttt{A443} & 1&  8:32 m& 4:37 m & 12.0 s\\ \hline
& 20 & 39.7 s & 1:23 m & 5.4 s s\\ \hline
\strut \ttt{A543} & 1 & --  &--  & 8:06 m \\ \hline
& 20 & 4:53 h & -- & 44.0 s \\ \hline
\strut \ttt{A553} & 20 &--  &--  &  1:22 h\\ \hline
\strut \ttt{lo6} & 1 &--  &--  & 3:19 h \\ \hline
& 20 &--  &--  & 27:11 m \\ \hline
\strut \ttt{5x5} & 1 & 45:39 m  & 11:52 m  &  1:25 m \\ \hline
& 20 & 5:16 m  & 5:18 m  & 18.5 s\\ \hline
\strut \ttt{cyclo60} & 1 &--  & 12:35 m  & 5:10 m \\ \hline
& 20 & 5:45 h  & 3:14 m  & 1:21 m\\ \hline
\end{tabular}
\vspace*{2ex} \caption{Triangulation}\label{data_tri}
\end{table}
\end{small}

For the computation of support hyperplanes the hybrid algorithm shows its power only for cones with truly large numbers of support hyperplanes, like \ttt{A553} or \ttt{cyclo60}. The third example \ttt{lo6} in Table \ref{data_sh} is a borderline case in which Pure Fourier-Motzkin elimination and the hybrid algorithm behave almost identically. The computation times of total pyramid decomposition are almost identical with those for triangulation since the only difference is that the simplicial cones must be stored.

\begin{small}
\begin{table}[hbt]
\centering
\begin{tabular}{|r|r|r|r|}\hline
\rule[-0.1ex]{0ex}{2.5ex}Input&threads&Fourier-Motzkin& hybrid\\ \hline
\strut \ttt{lo6} &1 & 39.3 s & 44.2 s \\ \hline
 &20 & 4.5 s & 4.1 s \\ \hline
\strut \ttt{cyclo60} &1 & -- & 2:52 m \\ \hline
&20  & 1:23 h & 44.3 s\\ \hline
\strut \ttt{A553} &1 & 2:48 h & 11:47 m \\ \hline
 &20 &  10:29 m& 1:08 m \\ \hline
\end{tabular}
\vspace*{2ex} \caption{Support hyperplanes}\label{data_sh}
\end{table}
\end{small}

\section{Evaluation of simplicial cones}\label{Eval}

The fast computation of triangulations via pyramid
decomposition must be accompanied by an efficient evaluation of
the simplicial cones in the triangulation $\Delta$, which, after the introduction of the pyramid decomposition, is almost
always the more time consuming step. Like the processing of pyramids, the evaluation of simplicial cones is parallelized in Normaliz.

Let $\sigma$ be a simplicial cone generated by the linearly
independent vectors $v_1,\dots,v_d$. The evaluation is based on
the \emph{generator matrix} $G_\sigma$ whose \emph{rows} are
$v_1,\dots,v_d$. Before we outline the evaluation procedure,
let us substantiate the remark made in Section \ref{LexTri}
that finding the support hyperplanes amounts to the inversion
of $G_\sigma$. Let $H_i$ be the support hyperplane of $\sigma$
opposite to $v_i$, given by the linear form
$\lambda_i=a_{1i}e_1^*+\dots+a_{di}e_d^*$ with coprime integer
coefficients $a_j$. Then
\begin{equation}
\lambda_i(v_k)=\sum_{j=1}^d v_{kj}a_{ji}=\begin{cases}
\hht_{H_i}(v_i),&k=i,\\ 0,&k\neq i.
\end{cases}\label{invert}
\end{equation}
Thus the matrix $(a_{ij})$ is $G_\sigma^{-1}$ up to scaling of
its columns. Usually the inverse is computed only for the first
simplicial cone in every pyramid since its support hyperplanes
are really needed. But matrix inversion is rather expensive,
and Normaliz goes to great pains to avoid it.

Normaliz computes sets of vectors, primarily Hilbert bases, but
also measures, for example the volumes of rational polytopes. A
polytope $P$ arises from a cone $C$ by cutting $C$ with a
hyperplane, and for Normaliz such hyperplanes are defined by
gradings: a \emph{grading} is a linear form $\deg:\ZZ^d\to\ZZ$
(extended naturally to $\RR^d$) with the following properties:
(i) $\deg(x)>0$ for all $x\in C$, $x\neq 0$, and (ii)
$\deg(\ZZ^d)=\ZZ$. The first condition guarantees that the
intersection $P=C\cap A_1$ for the affine hyperplane
$$
A_1=\{x\in\RR^d:\deg(x)=1\}
$$
is compact, and therefore a rational polytope. The second
condition is harmless for integral linear forms since it can be
achieved by extracting the greatest common divisor of the
coefficients of $\deg$ with respect to the dual basis.

The grading $\deg$ can be specified explicitly by the user or
chosen implicitly by Normaliz. The implicit choice makes only
sense if there is a natural grading, namely one under which the
extreme integral generators of $C$ all have the same degree.
(If it exists, it is of course uniquely determined.)

At present, Normaliz evaluates the simplicial cones $\sigma$ in
the triangulation of $C$ for the computation of the following
data:
\begin{enumerate}
\item[(HB)] the Hilbert basis of $C$,
\item[(LP)] the lattice points in the rational polytope
    $P=C\cap A_1$,
\item[(Vol)] the normalized volume $\vol(P)$ of the
    rational polytope $P$ (also called the
    \emph{multiplicity} of $C$),
\item[(HF)] the \emph{Hilbert} or \emph{Ehrhart function}
    $H(C,k)=|kP\cap \ZZ^d|$, $k\in\ZZ_+$.
\end{enumerate}

\subsection{Volume computation} Task (Vol) is the easiest, and Normaliz computes
$\vol(P)$ by summing the volumes $\vol(\sigma\cap A_1)$ where
$\sigma$ runs over the simplicial cones in the triangulation.
With the notation introduced above, one has
$$
\vol(\sigma\cap A_1)=\frac{|\det(G_\sigma)|}{\deg(v_1)\cdots\deg(v_d)}.
$$
For the justification of this formula note that the simplex
$\sigma\cap A_1$ is spanned by the vectors $v_i/\deg(v_i)$,
$i=1,\dots,d$, and that the vertex $0$ of the $d$-simplex
$\delta=\conv(0,\sigma\cap A_1)$ has (lattice) height $1$ over
the opposite facet $\sigma\cap A_1$ of $\delta$ so that
$\vol(\sigma\cap A_1)=\vol(\delta)$.

In pure volume computations Normaliz (since version 2.9)
utilizes the following proposition that often reduces the
number of determinant calculations significantly.

\begin{proposition}\label{uniexploit}
Let $\sigma$ and $\tau$ be simplicial cones sharing a facet $F$
Let $v_1,\dots,v_d$ span $\tau$ and let $v_d$ be opposite of
$F$. If $\det(G_\sigma)|=1$, then $|\det(G_\tau)|=\hht_F(v_d)$.
\end{proposition}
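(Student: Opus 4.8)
The plan is to exploit the fact that $\sigma$ and $\tau$ share the facet $F$, whose generators occur as common rows in both generator matrices, and to reduce both determinants to the single primitive linear form cutting out $F$. Write $F=\cone(v_1,\dots,v_{d-1})$ for the shared facet, so that $\tau=\cone(v_1,\dots,v_{d-1},v_d)$ while $\sigma=\cone(v_1,\dots,v_{d-1},w)$, where $w$ is the generator of $\sigma$ opposite $F$. Let $H\supset F$ be the corresponding support hyperplane and $\lambda$ its primitive integral normal form, so that $\lambda(v_1)=\dots=\lambda(v_{d-1})=0$ and $\hht_F(x)=|\lambda(x)|$ for every $x\in\RR^d$.

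First I would observe that the assignment $u\mapsto\det(v_1,\dots,v_{d-1},u)$, where $u$ is read as the last row, is a linear form $\mu\in(\RR^d)^*$ in $u$, obtained by cofactor expansion along the last row. Its kernel is exactly the set of $u$ making the rows dependent, that is, $u\in\operatorname{span}(v_1,\dots,v_{d-1})=H$. Since $\mu$ and $\lambda$ are two linear forms with the same hyperplane $H$ as kernel, they are proportional: $\mu=c\lambda$ for some scalar $c$. The essential point is that $c$ depends only on the shared data $v_1,\dots,v_{d-1}$, so it is \emph{the same} constant for both $\sigma$ and $\tau$.

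The step I expect to be the crux is showing $c\in\ZZ$, so that the hypothesis $|\det(G_\sigma)|=1$ can pin it down. Evaluating $\mu=c\lambda$ at the unit vectors $e_j$ gives $c\,\lambda(e_j)=\det(v_1,\dots,v_{d-1},e_j)$; the right-hand sides are cofactors of an integer matrix, hence integers, and $\lambda(e_j)$ is the $j$-th coefficient $\lambda_j$ of $\lambda$. Since $\lambda$ is primitive, $\gcd(\lambda_1,\dots,\lambda_d)=1$, so a B\'ezout relation $\sum_j m_j\lambda_j=1$ with $m_j\in\ZZ$ yields $c=\sum_j m_j(c\lambda_j)=\sum_j m_j\det(v_1,\dots,v_{d-1},e_j)\in\ZZ$. (Equivalently, $|c|$ is the lattice index $[H\cap\ZZ^d:\ZZ v_1+\dots+\ZZ v_{d-1}]$, a positive integer.)

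Finally I would combine these. Applying $\mu=c\lambda$ to $u=w$ and to $u=v_d$ gives
$$
|\det(G_\sigma)|=|c|\,|\lambda(w)|=|c|\,\hht_F(w),\qquad
|\det(G_\tau)|=|c|\,|\lambda(v_d)|=|c|\,\hht_F(v_d).
$$
Since $|c|$ and $\hht_F(w)$ are both positive integers whose product $|\det(G_\sigma)|$ equals $1$, we must have $|c|=1$ (and incidentally $\hht_F(w)=1$). Substituting into the second identity yields $|\det(G_\tau)|=\hht_F(v_d)$, as claimed.
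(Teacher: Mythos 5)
Your proof is correct, and it takes a genuinely different route from the paper's. The paper uses the hypothesis $|\det G_\sigma|=1$ \emph{first}: it concludes that $v_1,\dots,v_{d-1},w$ is a $\ZZ$-basis of $\ZZ^d$, writes the coordinates of $v_1,\dots,v_d$ with respect to this basis, and observes that the resulting matrix is triangular with $1$'s on the diagonal except for the last entry $-\hht_F(v_d)$; the claim then falls out of multiplicativity of determinants (the paper also points out that the statement is a special case of Proposition 3.9 in Bruns--Gubeladze). You never change basis: you treat $u\mapsto\det(v_1,\dots,v_{d-1},u)$ as a linear form $\mu$, identify it as an integer multiple $c\lambda$ of the primitive form $\lambda$ cutting out $F$ (with integrality of $c$ secured by cofactor expansion plus a B\'ezout relation for the coprime coefficients of $\lambda$), and only at the very end invoke $|\det G_\sigma|=|c|\,\hht_F(w)=1$ to force $|c|=1$. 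Your route is slightly longer but buys more: it isolates the general identity
$$
|\det G_\tau|=\bigl[\ZZ^d\cap H:\ZZ v_1+\dots+\ZZ v_{d-1}\bigr]\cdot\hht_F(v_d),
$$
valid for \emph{any} simplicial cone $\tau$ attached along $F$, of which the proposition is exactly the case where unimodularity of $\sigma$ forces the lattice index to be $1$ (and, as a byproduct, $\hht_F(w)=1$). The paper's change-of-basis argument is shorter and makes immediately visible why unimodularity trivializes the height computation, which is the fact the implementation actually exploits.
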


\begin{proof}
The proposition is a special case of \cite[Prop.~3.9]{BG}, but
is also easily seen directly. Suppose that $w_d$ is the
generator of $\sigma$ opposite to $F$. Then
$G_\sigma=\{v_1,\dots,v_{d-1},\allowbreak w_d\}$, and $|\det G_\sigma|=1$
by hypothesis. Therefore $v_1,\dots,v_{d-1},w_d$ span $\ZZ^d$.
With respect to this basis, the matrix of coordinates of
$v_1,\dots,v_d$ is lower trigonal with $1$ on the diagonal,
except in the lower right corner where we find $-\hht_F(v_d)$.
\end{proof}

Every new simplicial cone $\tau$ found by \textsc{ExtendTri} is
taken piggyback by an already known ``partner'' $\sigma$
sharing a facet $F$ with $\tau$. Therefore Normaliz records
$|\det G_\sigma|$ with $\sigma$, and if $|\det G_\sigma|=1$
there is no need to compute $|\det(G_\tau)|$ since the height
of the ``new'' generator $v_d$ over $F$ is known. Remark
\ref{timeRem}(b) contains some numerical data illuminating the
efficiency of this strategy that we call \emph{exploitation of
unimodularity}. One should note that it is inevitable to
compute $|\det(G_\sigma)|$ for the first simplicial cone in
every pyramid.

\subsection{Lattice points in the fundamental domain}
The sublattice $U_\sigma$ spanned by $v_1,\dots,v_d$ acts on $\RR^d$ by translation. The semi-open parallelotope
$$
\para(v_1,\dots,v_d)=\{q_1v_1+\dots+q_dv_d:0\le q_i<1\}.
$$
is a fundamental domain for this action; see Figure \ref{figfund}. In particular,
$$
E=E_\sigma=\para(v_1,\dots,v_d)\cap\ZZ^d
$$
is  a set of
representatives of the group $\ZZ^d/U_\sigma$.
The remaining tasks depend crucially on the set $E$.

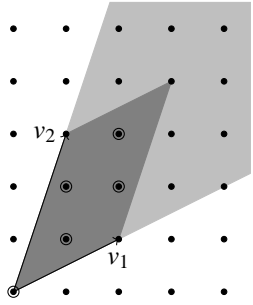
\begin{figure}[hbt]
\begin{center}
\begin{tikzpicture}[scale=0.7]
\filldraw[lightgray] (0,0) -- (1.833,5.5) -- (4.5,5.5) -- (4.5,2.25) -- cycle;
\filldraw[gray] (0,0) -- (1,3) -- (3,4) -- (2,1) -- cycle;
\draw[->] (0,0) -- (1,3) node at (0.6,3){$v_2$};
\draw[->] (0,0) -- (2,1) node at (2,0.6){$v_1$};
     \foreach \x in {0,...,4}
     \foreach \y in {0,...,5}
     {
     	\filldraw[fill=black]  (\x,\y)  circle (1.5pt);
     }
\draw (0,0)  circle (3.0pt);
\draw (1,1)  circle (3.0pt);
\draw (1,2)  circle (3.0pt);
\draw (2,2)  circle (3.0pt);
\draw (2,3)  circle (3.0pt);
\end{tikzpicture}
\end{center}
\caption{Lattice points in the fundamental domain }\label{figfund}
\end{figure}

For the efficiency of the evaluation it is important to
generate $E$ as fast as
possible.
One finds $E$ in two steps:
\begin{enumerate}
\item[(Rep)] find a representative of every residue class of the vectors in $\ZZ^d$,
    and
\item[(Mod)] reduce its coefficients with respect to the
    $\QQ$-basis $v_1,\dots,v_d$ modulo $1$.
\end{enumerate}

The first idea for (Rep) that comes to mind (and used in the
first version of Normaliz) is to decompose $\ZZ^d/U_\sigma$
into a direct sum of cyclic subgroups $\ZZ \overline u_i$,
$i=1,\dots,d$ where $u_1,\dots,u_d$ is a $\ZZ$-basis of $\ZZ^d$
and $\overline{\phantom{u}}$ denotes the residue class modulo
$U_\sigma$. The elementary divisor theorem guarantees the
existence of such a decomposition, and finding it amounts to a
diagonalization of $G_\sigma$ over $\ZZ$. But diagonalization
is even more  expensive than matrix inversion, and therefore it
is very helpful that a filtration of $\ZZ^d/U_\sigma$ with
cyclic quotients is sufficient. Such a filtration can be based
on trigonalization:

\begin{proposition}\label{trigon}
With the notation introduced, let $e_1,\dots,e_d$ denote the
unit vectors in $\ZZ^d$ and let $X\in\GL(d,\ZZ)$ such that
$XG_\sigma$ is an upper triangular matrix $D$ with diagonal
elements $a_1,\dots,a_d\ge 1$. Then the vectors
\begin{equation}
b_1e_1+\dots+b_de_d,\qquad 0\le b_i<a_i,\ i=1,\dots,d, \label{filter}
\end{equation}
represent the residue classes in $\ZZ^d/U_\sigma$.
\end{proposition}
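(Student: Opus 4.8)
The plan is to reduce the problem to the triangular generators of $U_\sigma$ and then exploit the triangular shape coordinate by coordinate. First I would observe that, since $X\in\GL(d,\ZZ)$, left multiplication by $X$ is an invertible $\ZZ$-linear operation on the rows; hence the rows $w_1,\dots,w_d$ of $D=XG_\sigma$ are integer combinations of $v_1,\dots,v_d$ and, via $X^{-1}$, conversely. Thus $w_1,\dots,w_d$ span exactly the same subgroup $U_\sigma$ as $v_1,\dots,v_d$. Writing $D=(d_{ij})$ with $d_{ij}=0$ for $i>j$ and $d_{ii}=a_i$, the generators read $w_i=a_ie_i+\sum_{j>i}d_{ij}e_j$, so that $w_i$ involves only the unit vectors $e_i,\dots,e_d$.

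Next I would count. The number of candidate vectors in \eqref{filter} is $a_1\cdots a_d=|\det D|$, and since $|\det X|=1$ this equals $|\det G_\sigma|=[\ZZ^d:U_\sigma]$, the last equality being the standard index formula for a finite-index subgroup of $\ZZ^d$ generated by the rows of an integral matrix of nonzero determinant. It therefore suffices to prove that the candidate vectors are pairwise incongruent modulo $U_\sigma$: an injective map from a set whose cardinality equals the index is automatically a complete system of representatives.

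For the incongruence I would argue by induction on the coordinate. Suppose $\sum_i b_ie_i\equiv\sum_i b_i'e_i \pmod{U_\sigma}$ with $0\le b_i,b_i'<a_i$; then the difference $c=\sum_i(b_i-b_i')e_i$ lies in $U_\sigma$, say $c=\sum_j m_jw_j$ with $m_j\in\ZZ$. Because only $w_1$ contributes to the first coordinate, the first coordinate of $c$ equals $m_1a_1$; but $|b_1-b_1'|<a_1$ forces $m_1=0$ and hence $b_1=b_1'$. Substituting $m_1=0$, the triangular shape makes the second coordinate of $c$ equal to $m_2a_2$, so $m_2=0$ and $b_2=b_2'$, and so on down the coordinates. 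Hence all $b_i=b_i'$, which is the desired injectivity.

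I expect no serious obstacle; the argument is essentially the reduction underlying Hermite (and Smith) normal form. The only point demanding care is the bookkeeping: keeping the upper-triangular orientation consistent so that subtracting a multiple of $w_i$ disturbs only coordinates $i,i+1,\dots,d$, which is exactly what drives both the inductive incongruence above and, alternatively, a direct constructive proof of surjectivity (reduce coordinate $1$ by a multiple of $w_1$, then coordinate $2$ by $w_2$, and so forth, each step leaving the already-fixed earlier coordinates untouched). Either the counting-plus-injectivity route or the constructive surjectivity route completes the proof.
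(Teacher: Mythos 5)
Your proposal is correct and follows essentially the same route as the paper: both proofs first note that the rows of $D=XG_\sigma$ generate $U_\sigma$, then reduce the claim via the count $|\ZZ^d/U_\sigma|=|\det G_\sigma|=a_1\cdots a_d$ to showing that the listed vectors are pairwise incongruent, and both extract that incongruence from the upper-triangular shape of $D$. The only difference is the mechanics of the last step: you eliminate the coefficients $m_1,m_2,\dots$ of the difference vector forward along the coordinates, whereas the paper inducts downward from the largest index $p$ with $a_p>1$ using the enlarged subgroup $U_\sigma'=U_\sigma+\ZZ e_p$ and the fact that $a_p$ is the order of $\overline{e}_p$; your variant is, if anything, the more direct of the two.
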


\begin{proof}
Note that the rows of $XG_\sigma$ are a $\ZZ$-basis of
$U_\sigma$. Since $|\ZZ^d/U_\sigma|=|\det G_\sigma|=a_1\cdots
a_d$, it is enough to show that the elements listed represent
pairwise different residue classes. Let $p$ be the largest
index such that $a_p>1$. Note that $a_p$ is the order of the
cyclic group $\ZZ \overline e_p$, and that we obtain a
$\ZZ$-basis of $U_\sigma'=U_\sigma+\ZZ e_p$ if we replace the
$p$-th row of $XG_\sigma$ by $e_p$. If two vectors
$b_1e_1+\dots+b_pe_p$ and $b_1'e_1+\dots+b_p'e_p$ in our list
represent the same residue class modulo $U_\sigma$, then they are  even
more so modulo $U_\sigma'$. It follows that $b_i=b_i'$ for
$i=1,\dots,p-1$, and taking the difference of the two vectors,
we conclude that $b_p=b_p'$ as well.
\end{proof}

The first linear algebra step that comes up is therefore the
trigonalization
\begin{equation}
XG_\sigma=D.\label{trig}
\end{equation}

Let $G^\tr_\sigma$ be the transpose of $G_\sigma$. For (Mod) it
is essentially enough to reduce those $e_i$ modulo $1$ that
appear with a coefficient $>0$ in \eqref{filter}, and thus we
must solve the simultaneous linear systems
\begin{equation}
G^\tr_\sigma x_i=e_i,\qquad a_i>1, \label{repr}
\end{equation}
where we consider $x_i$ and $e_i$ as column vectors. In a crude
approach one would simply invert the matrix $G^\tr_\sigma$ (or
$G_\sigma$), but in general the number of $i$ such that $a_i>1$
is small compared to $d$ (especially if $d$ is large), and it
is much better to solve a linear system with the specific
multiple right hand side given by \eqref{repr}. The linear
algebra is of course done over $\ZZ$, using $a_1\cdots a_d$ as
a common denominator. Then Normaliz tries to produce the
residue classes and to reduce them modulo $1$ (or, over $\ZZ$,
modulo $a_1\cdots a_d$) as efficiently as possible.

For task (LP) one extracts the  vectors of degree $1$ from
$E$, and the degree $1$ vectors collected from all $\sigma$
from the set of lattice points in $P=C\cap A_1$. For (HB) one
first reduces the elements of $E\cup\{v_1,\dots,v_d\}$ to a
Hilbert basis of $\sigma$, collects these and then applies
``global'' reduction in $C$. This procedure has been described
in \cite{BI}.

\subsection{Hilbert series and Stanley decomposition}
The mathematically most
interesting task is (HF). The Hilbert series is defined by
$$
H_C(t)=\sum_{x\in C\cap\ZZ^d} t^{deg x}=\sum_{k=0}^\infty H(C,k)t^k,\qquad H(C,k)=|\{x\in C: \deg x =k\}|.
$$
It is well-known that $H_C(t)$ is the power series expansion of a rational function in $t$.

For a simplicial cone $\sigma$ spanned by $v_1,\dots,v_d$ as above one has
$$
H_\sigma(t)=\frac{h_0+h_1t+\dots+h_st^s}{(1-t^{g_1})\cdots (1-t^{g_d})},
\qquad g_i=\deg v_i,\qquad h_j=|\{x\in E_\sigma:\deg x=j\}|.
$$
This follows immediately from the disjoint decomposition
\begin{equation}
\sigma\cap\ZZ^d=\bigcup_{x\in E_\sigma} x+M_\sigma \label{disjoint}
\end{equation}
where $M_\sigma$ is the (free) monoid generated by
$v_1,\dots,v_d$.

However, one cannot compute $H_C(t)$ by simply summing these
functions over $\sigma\in\Delta$ since points in the intersections of the simplicial
cones $\sigma$ would be counted several times. Fortunately, the
intricate inclusion-exclusion problem can be avoided since
there exist \emph{disjoint} decompositions
\begin{equation}
C=\bigcup_{\sigma\in\Delta} \sigma\setminus S_\sigma\label{decosigma}
\end{equation}
of $C$ by semi-open
simplicial cones $\sigma\setminus S_\sigma$ where $S_\sigma$ is the union of some
facets (and not just arbitrary faces!) of $\sigma$. Following Kleinschmidt and Smilansky \cite{KZ} we call a decomposition of type \eqref{decosigma} a \emph{facet cover} of $\Delta$. (The name is motivated by the fact that each lower dimensional face of $\Delta$ is contained in exctly one of the ``surviving'' facets.)

Before we discuss the existence and computation of a facet cover, let us first derive a representation of the Hilbert series based on it. It generalizes the $h$-vector formula of McMullen-Walkup \cite[5.1.14]{BH}.

Let $\sigma\in\Delta$ and $x\in E_\sigma$, $x=\sum q_iv_i$. Then we
define $\epsilon(x)$ as the sum of all $v_i$ such that (i)
$q_i=0$ and (ii) the facet opposite to $v_i$ belongs to $S$. Since $(x+M_\sigma)\setminus
S=\epsilon(x)+x+M_\sigma$, we obtain the \emph{Stanley decomposition}
\begin{equation}
C\cap\ZZ^d =\bigcup_{\sigma\in\Delta} M_\sigma\setminus S_\sigma=\bigcup_{\sigma\in\Delta}\ \bigcup_{x\in E_\sigma} x+\epsilon(x)+M_\sigma.
\end{equation}
of $C\cap\ZZ^d$ into disjoint subsets. A Stanley decomposition into $4$ components is illustrated by Figure \ref{figdeco} in which lattice points in different components are marked differently.
\begin{figure}[hbt]
\begin{center}
\begin{small}
\begin{tikzpicture}
[scale=0.7,auto=left]
\draw (0,0) -- (8,6);
\draw (0,0) -- (8,0);
\draw (0,0) -- (8,8);
     \foreach \x in {0,...,8}
     \foreach \y in {0,...,\x}
     {
     	\draw (\x,\y)  circle (1.5pt);
     }
     \foreach \x/\y in { 0/0, 1/0, 2/0, 3/0, 4/0, 5/0, 6/0, 7/0, 8/0 }
     {
	     \filldraw[fill=black]  (\x,\y)  circle (1.5pt);	
     	
     }
     \foreach \x/\y in { 4/3, 5/3, 6/3,  7/3, 8/3}
     {
     	\filldraw[fill=black]  (\x,\y)  circle (1.5pt);     	
     }
     \filldraw[fill=black]  (8,6)  circle (1.5pt);
      \foreach \x/\y in { 2/1, 3/1, 4/1, 5/1, 6/1, 7/1, 8/1 }
      {
         	\draw  (\x,\y) node at (\x,\y) {$*$}  ;         	
      }
      \foreach \x/\y in { 6/4, 7/4, 8/4}
      {
      	\draw  (\x,\y) node at (\x,\y) {$*$}  ;	      	
      }
      \foreach \x/\y in { 3/2, 4/2, 5/2, 6/2, 7/2, 8/2}
      {
      	\draw  (\x,\y) node at (\x,\y) {$+$}  ;	      	
      }
      \draw  (7,5) node at (7,5) {$+$}  ;
      \draw  (8,5) node at (8,5) {$+$}  ;
\end{tikzpicture}
\end{small}
\end{center}
\caption{A Stanley decomposition}\label{figdeco}
\end{figure}
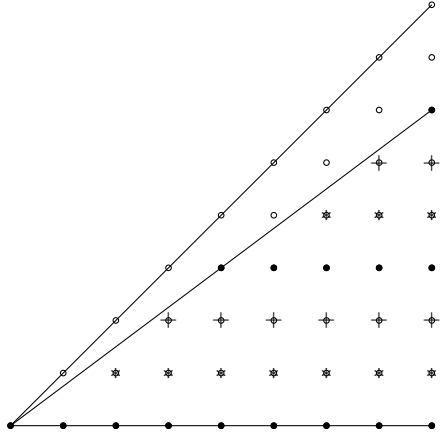

The series
$H_{\sigma\setminus S_\sigma}(t)$ is as easy to compute as
$H_\sigma(t)$:
\begin{align}
H_{\sigma\setminus S_\sigma}(t)&=\sum_{y\in M_\sigma\setminus S_\sigma} t^{\deg y} =\sum_{x\in E_\sigma}\sum_{z\in M_\sigma} t^{\deg x+\epsilon(x)+z}=\sum_{x\in E_\sigma} t^{\deg x+\epsilon(x)}H_\sigma(t) \nonumber\\ &=
\frac{\sum_{x\in E_\sigma}
t^{\deg \epsilon(x)+\deg x}}{(1-t^{g_1})\cdots (1-t^{g_d})}. \label{hilbS}
\end{align}
It only remains to sum the series $H_{\sigma\setminus S_\sigma}(t)$ over the triangulation $\Delta$.

The existence of a facet cover and (consequently) a Stanley decomposition of $C$ was shown by Stanley \cite[Theorem 5.2]{Sta} using
the existence of a line shelling of $C$ (proved by Bruggesser
and Mani). Instead of finding a shelling order for the
lexicographic triangulation (which is in principle possible),
Normaliz 2.0--2.5 used a line shelling for the decomposition,
as discussed in \cite{BI}.

This approach works well for cones
of moderate size, but has a major drawback: finding the sets
$S$ requires searching over the shelling order, and in
particular the whole triangulation must be stored.  We learned a much simpler principle for the
disjoint decomposition (already implemented in Normaliz 2.7) from Köppe and
Verdoolaege \cite{KV}. It was previously used by Kleinschmidt and Smilansky \cite{KZ} (also see Stanley \cite[p. 85]{Sta1}).
As a consequence, each simplicial cone in the triangulation can
be treated in complete independence from the others, and can
therefore be discarded once it has been evaluated (unless the
user insists on seeing the triangulation):

\begin{lemma}\label{inex}
Let $O_C$ be a vector in the interior of $C$ such that $O_C$ is
not contained in a support hyperplane of any simplicial
$\sigma$ in a triangulation of $C$. For $\sigma$ choose
$S_\sigma$ as the union of the support hyperplanes
$\cH^<(\sigma,O_C)$. Then the semi-open simplicial cones
$\sigma\setminus S_\sigma$ form a disjoint decomposition of
$C$.
\end{lemma}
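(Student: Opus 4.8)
The plan is to prove the stronger pointwise statement that every $x\in C$ lies in \emph{exactly one} of the semi-open cones $\sigma\setminus S_\sigma$; this yields disjointness (at most one) and covering (at least one) simultaneously. The device I would use is an infinitesimal perturbation of $x$ toward the generic interior point $O_C$, turning the combinatorial assignment of boundary points into the selection of the unique full-dimensional cell whose interior the perturbed point enters.

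First I would rewrite membership in the half-open cone in terms of the support forms. For a full-dimensional $\sigma$ in the triangulation, write $\lambda_H$ for the inner (with respect to $\sigma$) support form of a facet hyperplane $H\in\cH(\sigma)$, so that $\sigma=\bigcap_{H\in\cH(\sigma)}H^+$. Then $x\in\sigma\setminus S_\sigma$ says precisely that $\lambda_H(x)\ge 0$ for every facet $H$, and moreover $\lambda_H(x)>0$ whenever $O_C\in H^<$. Equivalently: $x\in\sigma$ and no facet $H$ of $\sigma$ satisfies both $x\in H$ and $O_C\in H^<$.

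Next I would introduce $y_t=x+t(O_C-x)=(1-t)x+tO_C$ for small $t>0$. Since $O_C$ is interior and lies on none of the finitely many support hyperplanes occurring in the triangulation, I would check that for all sufficiently small $t>0$ the point $y_t$ lies in $\inte C$ and avoids every such hyperplane: for a facet $H$ with $x\in H$ one has $\lambda_H(y_t)=t\lambda_H(O_C)\ne 0$, and for $x\notin H$ one uses continuity, choosing a single $\delta$ that works for all finitely many cones and hyperplanes at once. The core computation is the sign of $\lambda_H(y_t)=(1-t)\lambda_H(x)+t\lambda_H(O_C)$: for small $t$ this is positive exactly when either $\lambda_H(x)>0$, or $\lambda_H(x)=0$ and $O_C\in H^>$. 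Comparing with the condition from the previous step gives the key equivalence: for any $\sigma$ containing $x$, one has $y_t\in\inte\sigma$ (for small $t$) if and only if $x\in\sigma\setminus S_\sigma$.

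Finally I would invoke that a triangulation is a genuine polyhedral subdivision, so the generic point $y_t\in\inte C$ lies in the interior of exactly one full-dimensional cell $\sigma_0$; and $\sigma_0\ni x$ because $\sigma_0$ is closed and $y_t\to x$ as $t\to 0^+$. Together with the equivalence this shows $x\in\sigma\setminus S_\sigma$ for $\sigma=\sigma_0$ and for no other $\sigma$, proving the claim. The step I expect to require the most care is the third: getting the sign bookkeeping and the orientation convention right, in particular verifying that perturbing \emph{toward} $O_C$ (rather than away) is exactly what matches the choice $S_\sigma=\cH^<(\sigma,O_C)$, so that a shared facet of two adjacent cells is always assigned to the cell lying on the same side of it as $O_C$.
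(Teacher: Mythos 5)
Your proof is correct, but note that the paper itself contains no argument for this lemma: it states it and refers to K\"oppe and Verdoolaege \cite{KV}, so there is no internal proof to compare against, and your write-up supplies exactly what the paper omits, in essentially the spirit of the cited source (perturbation of an arbitrary point toward a generic interior point). The three ingredients you rely on all hold: (i) the translation of $x\in\sigma\setminus S_\sigma$ into sign conditions on the inner support forms $\lambda_H$, using that $O_C$ lies on no support hyperplane so each $\lambda_H(O_C)$ has a definite sign; (ii) the identity $\lambda_H(y_t)=(1-t)\lambda_H(x)+t\lambda_H(O_C)$, which shows that on a single interval $(0,\delta)$, chosen uniformly over the finitely many cones and hyperplanes, the sign of $\lambda_H(y_t)$ equals that of $\lambda_H(x)$ when the latter is nonzero and that of $\lambda_H(O_C)$ otherwise; in particular the condition $y_t\in\inte\sigma$ is independent of $t\in(0,\delta)$, and this $t$-independence is what legitimizes your closing step that $\sigma_0$ is closed and $y_t\to x$ forces $x\in\sigma_0$ (you should state this independence explicitly before taking the limit, since membership of a single perturbed point in $\inte\sigma_0$ does not by itself give a sequence in $\sigma_0$ converging to $x$); and (iii) the fact that in a genuine polyhedral subdivision distinct full-dimensional cells have disjoint interiors, so the generic point $y_t$ selects exactly one cell. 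Your orientation bookkeeping is also right: a facet shared by two adjacent cells is removed precisely from the cell on the far side of it from $O_C$, and perturbing toward $O_C$ crosses to the near side, so such boundary points are counted exactly once. What your approach buys is a short, self-contained, and elementary proof that could replace the external citation; what the paper's choice buys is brevity and a pointer to the more general parametric setting of \cite{KV}, where the same half-open decomposition is developed for algorithmic use.
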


See \cite{KV} for a proof. Figure \ref{figOrder} shows a facet cover resulting from Lemma \ref{inex}.
\begin{figure}[hbt]
\begin{center}
\begin{small}
\begin{tikzpicture}
[scale=0.7,auto=left, thick]
\foreach \x/\y in {0/2, 2/0, 5/0, 5/2, 5/4, 7/0, 7/4, 9/2}
\node [circle, draw, fill=black, inner sep=0pt, minimum width=2.5pt](n\x\y) at (\x,\y) {};
\node [circle, draw, fill=black, inner sep=0pt, minimum width=2.5pt](n23) at (2.5,3) {};
\node [circle, draw, inner sep=0pt, minimum width=3pt, label=above:$O_C$](OC) at (2.8,1.7) {};

\draw (n20) -- node[right=-2pt, pos=0.4]{$+$} node[left=-2pt, pos=0.4]{$-$} (n23);
\draw (n20) -- node[above=-2pt]{$+$} (n02);
\draw (n50) -- node[right=-2pt]{$-$} node[left=-2pt]{$+$} (n23);
\draw (n50) -- node[near end, right=-2pt]{$-$} node[near end, left=-2pt]{$+$} (n52);
\draw (n52) -- node[right=-2pt]{$-$} node[left=-2pt]{$+$} (n54);
\draw (n70) -- node[right=-2pt, pos=0.4]{$-$} node[left=-2pt, pos=0.4]{$+$} (n74);

\draw (n52) -- node[below=-2pt]{$+$} node[above=-2pt]{$-$} (n23);
\draw (n52) -- node[below=-2pt]{$-$} node[above]{$+$} (n74);
\draw (n50) -- node[right=-2pt]{$-$} node[left=-2pt]{$+$} (n74);

\draw (n02) -- node[below=-2pt]{$+$} (n23);
\draw (n23) -- node[right=5pt]{$+$} (n54);
\draw (n20) -- node[above=-2pt]{$+$} (n50);
\draw (n50) -- node[above=-2pt]{$+$} (n70);
\draw (n54) -- node[below=-2pt]{$+$} (n74);
\draw (n70) -- node[above=-2pt]{$+$} (n92);
\draw (n74) -- node[below=-2pt]{$+$} (n92);

\end{tikzpicture}
\end{small}
\end{center}
\caption{Using the order vector}\label{figOrder}
\end{figure}
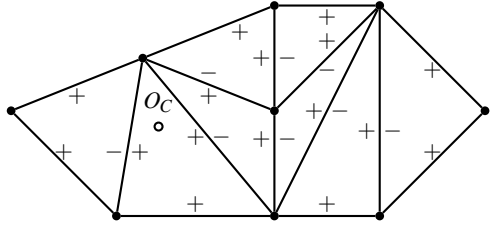

It is of course not possible to
choose an \emph{order vector} $O_C$ that avoids all hyperplanes
in advance, but this is not a real problem. Normaliz chooses
$O_C$ in the interior of the first simplicial cone, and works
with a lexicographic infinitesimal perturbation $O_C'$. (This
trick is known as "simulation of simplicity" in computational
geometry; see Edelsbrunner \cite{Ed}). If $O_C\in H^<$ (or
$O_C\in H^>$), then $O_C'\in H^<$ (or $O_C'\in H^>$). In the
critical case $O_C\in H$, we take the linear form $\lambda$
representing $H$ and look up its coordinates in the dual basis
$e_1^*,\dots,e_d^*$. If the first nonzero coordinate is
negative, then $O_C'\in H^<$, and else $O_C'\in H^>$.

At first it seems that one must compute the support hyperplanes
of $\sigma$ in order to apply Lemma \ref{inex}. However, it is
much better to solve the system
\begin{equation}
G^\tr_\sigma I^\sigma=O_C.\label{indicator}
\end{equation}
The solution $I^\sigma$ is called the \emph{indicator} of
$\sigma$. One has $O_C\in H^<$ (or $O_C\in H^>$) if
$I^\sigma_i<0$ (or $I^\sigma_i>0$) for the generator $v_i$
opposite to $H$ ($\lambda$ vanishes on $H$). Let us call
$\sigma$ \emph{generic} if all entries of $I^\sigma$ are
nonzero.

If $I^\sigma_i=0$---this happens rarely, and very rarely
for more than one index $i$---then we are forced to compute the
linear form representing the support hyperplane opposite of
$v_i$. In view of \eqref{invert} this amounts to solving the
systems
\begin{equation}
G_\sigma x=e_i, \qquad I^\sigma_i=0, \label{supp}
\end{equation}
simultaneously for the lexicographic decision.

If $\sigma$ is unimodular, in other words, if $|\det
G_\sigma|=1$, then the only system to be solved is
\eqref{indicator}, provided that $\sigma$ is generic. Normaliz
tries to take advantage of this fact by guessing whether
$\sigma$ is unimodular, testing two necessary conditions:
\begin{itemize}
\item[(PU1)] Every $\sigma$ (except the first) is inserted
    into the triangulation with a certain generator $x_i$.
    Let $H$ be the facet of $\sigma$ opposite to $x_i$. If
    $\hht_H(x_i)>1$, then $\sigma$ is nonunimodular. (The
    number $\hht_H(x_i)$ has been computed in the course of
    the triangulation.)
\item[(PU2)] If $\gcd(\deg v_1,\dots,\deg v_d)>1$, then
    $\sigma$ is not unimodular.
\end{itemize}
If $\sigma$ passes both tests, we call it \emph{potentially
unimodular}. (Data on the efficiency of this test will be given
in Remark \ref{timeRem}(a)).

After these preparations we can describe the order in which
Normaliz treats the trigonalization \eqref{trig} and the linear
systems \eqref{repr}, \eqref{indicator} and \eqref{supp}:
\begin{enumerate}
\item[(L1)] If $\sigma$ is potentially unimodular, then
    \eqref{indicator} is solved first. It can now be
    decided whether $\sigma$ is indeed unimodular.
\item[(L2)] If $\sigma$ is not unimodular, then the
    trigonalization \eqref{trig} is carried out next. In
    the potentially unimodular, but nongeneric case, the
    trigonalization is part of the solution of \eqref{supp}
    (with multiple right hand side).
\item[(L3)] In the nonunimodular case, we now solve the
    system \eqref{repr} (with multiple right hand side).
\item[(L4)] If $\sigma$ is not potentially unimodular and
    not generic, it remains to solve the system
    \eqref{supp} (with multiple right hand side).
\end{enumerate}

As the reader may check, it is never necessary to perform all
$4$ steps. In the unimodular case, (L1) must be done, and
additionally (L2) if $\sigma$ is nongeneric. If $\sigma$ is not
even potentially unimodular, (L2) and (L3) must be done, and
additionally (L4) if it is nongeneric. In the potentially
unimodular, but nonunimodular case, (L1), (L2) and (L3) must be
carried out.

\subsection{Presentation of Hilbert series}
We conclude this section with a brief discussion of
the computation and the representation of the Hilbert series by
Normaliz. The reader can find the necessary background in
\cite[Chapter 6]{BG}.

Summing the Hilbert series \eqref{hilbS} is very simple if they
all have the same denominator, for example in the case in which
the generators of $C$ (or at least the extreme integral
generators) have degree $1$. For efficiency, Normaliz first
forms ``denominator classes'' in which the Hilbert series with
the same denominator are accumulated. At the end, the class
sums are added over a common denominator that is extended
whenever necessary. This yields a ``raw'' form of the Hilbert
series of type
\begin{equation}
H_C(t)=\frac{R(t)}{(1-t^{s_1})\cdots(1-t^{s_r})},\qquad R(t)\in \ZZ[t],\label{raw}
\end{equation}
whose denominator in general has $>d$ factors.

In order to find a presentation with $d$ factors, Normaliz
proceeds as follows. First it reduces the fraction to lowest
terms by factoring the denominator of \eqref{raw} into a
product of cyclotomic polynomials:
\begin{equation}
H_C(t)=\frac{Z(t)}{\zeta_{z_1}\cdots\zeta_{z_w}},\qquad Z(t)\in \ZZ[t],
\quad \zeta_{z_j}\nmid Z(t),\label{cycl}
\end{equation}
which is of course the most economical way for representing
$H_C(t)$ (as a single fraction).  The orders and the
multiplicities of the cyclotomic polynomials can easily be
bounded since all denominators in \eqref{hilbS} divide
$(1-t^\ell)^d$ where $\ell$ is the least common multiple of the
degrees $\deg x_i$. So we can find a representation
\begin{equation}
H_C(t)=\frac{F(t)}{(1-t^{e_1})\cdots (1-t^{e_d})},\qquad F(t)\in \ZZ[t],\label{period}
\end{equation}
in which $e_d$ is the least common multiple of the orders  of
the cyclotomic polynomials that appear in \eqref{cycl},
$e_{d-1}$ is the least common multiple of the orders that have
multiplicity $\ge 2$ etc. Normaliz produces the presentation
\eqref{period} whenever the degree of the numerator remains of
reasonable size.

It is well-known that the Hilbert function itself is a
quasipolynomial:
\begin{equation}
H(C,k)=q_0(k)+q_1(k)k+\dots+q_{d-1}(k)k^{d-1},\qquad k\ge0,
\end{equation}
where the coefficients $q_j(k)\in\QQ$ are periodic functions of
$k$ whose common period is the least common multiple of the
orders of the cyclotomic polynomials in the denominator of
\eqref{cycl}. Normaliz computes the quasipolynomial, with the
proviso that its period is not too large. It is not hard to see
that the periods of the individual coefficients are related to
the representation \eqref{period} in the following way: $e_k$
is the common period of the coefficients
$q_{d-1},\dots,q_{d-k}$. The leading coefficient $q_{d-1}$ is
actually constant (hence $e_1=1$), and related to the
multiplicity by the equation
\begin{equation}
q_{d-1}=\frac{\vol(P)}{(d-1)!}.\label{highest}
\end{equation}
Since $q_{d-1}$ and $\vol(P)$ are computed completely
independently from each other, equation \eqref{highest} can be
regarded as a test of correctness for both numbers.

The choice \eqref{period} for $H_C(t)$ is motivated by the
desire to find a standardized representation whose denominator
conveys useful information. The reader should note that this
form is not always the expected one. For example, for
$C=\RR_+^2$ with $\deg(e_1)=2$ and $\deg(e_2)=3$, the three
representations \eqref{raw}--\eqref{period} are
$$
\frac{1}{(1-t^2)(1-t^3)}=\frac{1}{\zeta_1^2\zeta_2\zeta_3}=\frac{1-t+t^2}{(1-t)(1-t^6)}.
$$
Actually, it is unclear what the most natural standardized
representation of the Hilbert series as a fraction of two
polynomials should look like, unless the denominator is
$(1-t)^d$. Perhaps the most satisfactory representation should
use a denominator $(1-t^{p_1})\cdots(1-t^{p_d})$ in which the
exponents $p_i$ are the degrees of a homogeneous system of
parameters (for the monoid algebra $K[\ZZ^d\cap C]$ over an
infinite field $K$). At present Normaliz cannot find such a
representation (except the one with the trivial denominator
$(1-t^\ell)^d)$), but future versions may contain this functionality.

\section{Computational results}\label{Comp}

In this section we want to document that the algorithmic
approach described in the previous sections (and \cite{BI}) is
very efficient and masters computations that appeared
inaccessible some years ago. We compare Normaliz 3.0 to 4ti2, version 1.6.6 \cite{4ti2},
for
Hilbert basis computations and to LattE integrale, version 1.7.3  \cite{LatInt}, for
Hilbert series.

Almost all computations were run on a Dell PowerEdge R910 with 4 Intel
Xeon E7540 (a total of 24 cores running at 2 GHz), 128 GB of
RAM and a hard disk of 500 GB. The remaining computations were run on a SUN
xFire 4450 with a comparable configuration. In parallelized computations we
have limited the number of threads used to $20$. As the large
examples below show, the parallelization scales efficiently. In
Tables \ref{times1} and \ref{times2}
serial execution is indicated by \ttt{1x} whereas \ttt{20x} indicates parallel
execution with a maximum of $20$ threads. Normaliz needs
relatively little memory. Almost all  Normaliz computations
mentioned run stably with $< 1$ GB of RAM.

Normaliz is distributed as open source under the GPL. In
addition to the source code, the distribution contains
executables for the major platforms Linux, Mac and Windows.

\subsection{Overview of the examples}

We have chosen the following test candidates:

\begin{enumerate}
\item \ttt{CondPar}, \ttt{CEffPl} and \ttt{PlVsCut} come
    from social choice theory. \ttt{CondPar}
    represents the Condorcet paradox, \ttt{CEffPl}
    computes the Condorcet efficiency of plurality voting,
    and \ttt{PlVsCut} compares plurality voting to cutoff,
    all for $4$ candidates. See Schürmann \cite{Sch} for
    more details.
\item \ttt{4x4}, \ttt{5x5} and \ttt{6x6} represent monoids
    of ``magic squares'': squares of size $4\times 4$,
    $5\times 5$ and $6\times 6$ to be filled with
    nonnegative integers in such a way that all rows,
    columns and the two diagonals sum to the same ``magic
    constant''. They belong to the standard LattE
    distribution \cite{LatInt}.
\item \ttt{bo5} and \ttt{lo6} belong to the area of
    statistical ranking; see Sturmfels and Welker
    \cite{SW}. \ttt{bo5} represents the boolean model for
    the symmetric group $S_5$ and \ttt{lo6} represents the
    linear order model for $S_6$.
\item \ttt{small} and \ttt{big} are test examples used in
    the development of Normaliz without further importance.
    \ttt{small} has already been discussed in \cite{BI}.
\item \ttt{cyclo36}, \ttt{cyclo38}, \ttt{cyclo42} and
    \ttt{cyclo60} represent the cyclotomic monoids of
    orders $36$, $38$, $42$ and $60$. They are additively
    generated by the pairs $(\zeta,1)\in \CC\times \ZZ_+$
    where $\zeta$ runs over the roots of unity of the given
    order. They have been discussed by Beck and Ho\c{s}ten
    \cite{BeH}.
\item \ttt{A443} and \ttt{A553} represent monoids defined by
    dimension $2$ marginal distributions of dimension $3$
    contingency tables of sizes $4\times4\times3$ and
    $5\times5\times3$. They had been open cases in the
    classification of Ohsugi and Hibi \cite{OH} and were
    finished in \cite{BHIKS}.
\item \ttt{cross10}, \ttt{cross15} and \ttt{cross20} are
    (the monoids defined by) the cross polytopes of
    dimensions $10$, $15$ and $20$ contained in the LattE
    distribution \cite{LatInt}.
\end{enumerate}

\begin{table}[hbt]
\begin{small}
\centering
\rlap{
\begin{tabular}{|r|r|r|r|r|r|r|r|}\hline
\rule[-0.1ex]{0ex}{2.5ex}Input& edim & rank & $\#$ext & $\#$supp& $\#$Hilb & $\#$ triangulation & $\#$ Stanley dec\\ \hline
\strut \ttt{CondPar} & 24 & 24 & 234 & 27 & 242 & 1,344,671 & 1,816,323\\ \hline
\strut \ttt{PlVsCut} & 24 & 24 & 1,872 & 28 & 9,621 & 257,744,341,008 & 2,282,604,742,033\\ \hline
\strut \ttt{CEffPl}  & 24 & 24 & 3,928 & 30 & 25,192 & 347,225,775,338 & 4,111,428,313,448\\ \hline
\strut \ttt{4x4}     & 16 & 8 & 20 & 16 & 20 & 48 & 48\\ \hline
\strut \ttt{5x5}     & 25 & 15 & 1,940 & 25 & 4,828 & 14,615,011 & 21,210,526\\ \hline
\strut \ttt{6x6}     & 36 & 24 & 97,548 & 36 & 522,347 & -- & --\\ \hline
\strut \ttt{bo5}     & 31 & 27 & 120 & 235 & 120 & 20,853,141,970 & 20,853,141,970\\ \hline
\strut \ttt{lo6}     & 16 & 16 & 720 & 910 & 720 & 5,796,124,824 & 5,801,113,080\\ \hline
\strut \ttt{small}   & 6 & 6 & 190 & 32 & 34,591 & 4580 & 2,276,921\\ \hline
\strut \ttt{big}     & 7 & 7 & 27 & 56 & 73,551 & 542 & 18,788,796\\ \hline
\strut \ttt{cyclo36} & 13 & 13 & 36 & 46,656 & 37 & 44,608 & 46,656\\ \hline
\strut \ttt{cyclo38} & 19 & 19 & 38 & 923,780 & 39 & 370,710 & 923,780\\ \hline
\strut \ttt{cyclo42} & 13 & 13 & 42 & 24,360 & 43 & 153,174 & 183,120\\ \hline
\strut \ttt{cyclo60} & 17 & 17 & 60 & 656,100 & 61 & 11,741,300 & 13,616,100\\ \hline
\strut \ttt{A443}    & 40 & 30 & 48 & 4,948 & 48 & 2,654,272 & 2,654,320\\ \hline
\strut \ttt{A553}    & 55 & 43 & 75 & 306,955 & 75 & 9,248,466,183 & 9,249,511,725\\ \hline
\strut \ttt{cross10} & 11 & 11 & 20 & 1,024 & 21 & 512& 1,024\\ \hline
\strut \ttt{cross15} & 16 & 16 & 30 & 32,678 & 31 & 16,384 & 32,768\\ \hline
\strut \ttt{cross20} & 21 & 21 & 40 & 1,048,576 & 41 & 524,288 & 1,048,576\\ \hline
\end{tabular}}
\end{small}
\vspace*{2ex} \caption{Numerical data of test
examples}\label{data}
\end{table}

The columns of Table \ref{data} contain the values of
characteristic numerical data of the test examples $M$, namely:
edim is the embedding dimension, i.~e., the rank of the lattice
in which $M$ is embedded by its definition, whereas rank is the
rank of $M$. $\#$ext is the number of the extreme rays of the
cone $\RR_+M$, and $\#$supp the number of its support
hyperplanes. $\#$Hilb is the size of the Hilbert basis of
$M$.

The last two columns list the number of simplicial cones in the
triangulation and the number of components of the Stanley
decomposition. These data are not invariants of $M$. However,
if the triangulation uses only lattice points of a lattice
polytope $P$ (all examples starting from \ttt{bo5}), then the
number of components of the Stanley decomposition is exactly
the normalized volume of $P$.

The open entries for \ttt{6x6} seem to be out of reach
presently. The Hilbert series of \ttt{6x6} is certainly a
challenge for the future development of Normaliz. Other
challenges are \ttt{lo7}, the linear order polytope for $S_7$
and the first case of the cyclotomic monoids \ttt{cyclo105}
that is not covered by the theorems of Beck and Ho\c{s}ten
\cite{BeH}. Whether \ttt{cyclo105} will ever become computable,
is quite unclear in view of its gigantic number of support
hyperplanes. However, we are rather optimistic for \ttt{lo7};
the normality of the linear order polytope for $S_7$ is an open
question.

\subsection{Hilbert bases}

Table \ref{times1} contains the computation times for the
Hilbert bases of the test candidates. When comparing 4ti2 and
Normaliz one should note that 4ti2 is not made for the input of
cones by generators, but for the input via support hyperplanes
(\ttt{CondPar} -- \ttt{6x6}). The same applies to the Normaliz
dual mode \ttt{-d}. While Normaliz is somewhat faster even in
serial execution, the times are of similar magnitude. It is
certainly an advantage that its execution   has been
parallelized. When one runs Normaliz with the primary algorithm
on such examples it first computes the extreme rays of the cone
and uses them as generators.

Despite of the fact that several examples could not be expected
to be computable with 4ti2, we tried. We stopped the
computations when the time had exceeded 150 h (T) or the memory
usage had exceeded 100 GB (R). However, one should note that
\ttt{A553} (and related examples) can be computed by ``LattE
for tea, too'' (\url{http://www.latte-4ti2.de}), albeit with a
very large computation time; see \cite{BHIKS}. This approach
uses symmetries to reduce the amount of computations.
\begin{table}[hbt]
\centering
\begin{small}
\begin{tabular}{|r|r|r|r|r|r|}\hline
\rule[-0.1ex]{0ex}{2.5ex}Input&\ttt{4ti2}&\ttt{Nmz -d 1x}&\ttt{Nmz -d 20x}&\ttt{Nmz  -N 1x}&\ttt{Nmz -N 20x}\\ \hline
\strut \ttt{CondPar} & 0.024 s           & 0.014 s       & 0.026 s        & 2.546 s        & 0.600 s\\ \hline
\strut \ttt{PlVsCut} & 6.672 s           & 0.820 s       & 0.476 s        & --             & --\\ \hline
\strut \ttt{CEffPl}  & 6:08 m            & 28.488 s      & 3.092 s        & --             & --\\ \hline
\strut \ttt{4x4}     & 0.008 s           & 0.003 s       & 0.011 s        & 0.005 s        & 0.016 s\\ \hline
\strut \ttt{5x5}     & 3.823 s           & 1.004 s       & 0.339 s        & 1:06 m         & 23.714 s\\ \hline
\strut \ttt{6x6}     & 115:26:31 h       & 14:19:39 h    & 1:19:34 h      & --             & --\\ \hline
\strut \ttt{bo5}     & T                 & --            & --             & 0.273 s        & 0.174 s\\ \hline
\strut \ttt{lo6}     & 31:09 m           & 1:46 m        & 39.824 s       & 1:08 m         & 13:369 s\\ \hline
\strut \ttt{small}   & 48:19 m           & 18:45 m       & 3:25 m         & 1.935 s        & 1.878 s\\ \hline
\strut \ttt{big}     & T                 & --            & --             & 1:45 m         & 15.636 s\\ \hline
\strut \ttt{cyclo36} & T                 & --            & --             & 0.774 s        & 0.837 s\\ \hline
\strut \ttt{cyclo38} & R                 & --            & --             & 6:32:50 h    & 1:04:04 h\\ \hline
\strut \ttt{cyclo60} & R                 & --            & --             & 2:55 m         & 1:02 m\\ \hline
\strut \ttt{A443}    & T                 & --            & --             & 1.015 s        & 0.270 s\\ \hline
\strut \ttt{A553}    & R                 & --            & --             & 44:11 m        & 4:24 m\\ \hline
\end{tabular}
\vspace*{2ex} \caption{Computation times for Hilbert
bases}\label{times1}
\end{small}
\end{table}

In Table \ref{times1} the option \ttt{-d} indicates the dual algorithm, and \ttt{-N} indicates the the primal algorithm for Hilbert bases. The number $n$ of threads is given by $n$\ttt{x}.

The examples \ttt{CEffPl}, \ttt{PlVsCut}, \ttt{5x5} and
\ttt{6x6} are clear cases for the dual algorithm. However, it
is sometimes difficult to decide whether the primary,
triangulation based algorithm or the dual algorithm is faster. As
\ttt{small} clearly shows, the dual algorithm behaves badly if
the final Hilbert basis
is large, even if the number of support hyperplanes is small.

The computation time of \ttt{bo5} which is close to zero is
quite surprising at first glance, but it has a simple
explanation: the lexicographic triangulation defined by the
generators in the input file is unimodular so that all pyramids
have height $1$, and the partial triangulation is empty.

The computation time for the Hilbert basis of \ttt{cyclo38} is
large compared to the time for the Hilbert series in Table
\ref{times2}. The reason is the large number of support
hyperplanes together with a large number of candidates for the
Hilbert basis. Therefore the reduction needs much time.

The Hilbert basis computations in the Normaliz primary mode
show the efficiency of partial triangulations (see Section
\ref{partial}). Some numerical data are contained in
\cite{BHIKS}.

We have omitted the \ttt{cross} examples from the Hilbert basis
computation in view of the obvious unimodular triangulation of
the cross polytopes (different from the one used by Normaliz).
\ttt{cross20} needs 16 s for \ttt{Nmz -N x1}.

\subsection{Hilbert series} Now we compare the computation
times for Hilbert series of Normaliz and LattE. One should note
that the computations with LattE are not completely done by open
source software: for the computation
of Hilbert series it invokes the commercial program Maple.
LattE has a variant for the computation of Hilbert polynomials
that avoids Maple; however, it can only be applied to lattice
polytopes (and not to rational polytopes in general).

There are three columns with computation times for LattE. The
first, \ttt{LattE ES}, lists the times for LattE alone, without
Maple, the second, \ttt{LattE + M ES}, the combined computation
time of LattE and Maple (both for Hilbert series), and the third, \ttt{LattE EP}, the
computation time of LattE for the Hilbert polynomial. In all of
these three columns we have chosen the best time that we have
been able to reach with various parameter settings for LattE.
However, LattE has failed on many candidates, partly because it
produces enormous output files. We have stopped it when the
time exceeded 150 hours (T), the memory usage was more than 100
GB RAM (R) or it has  produced more than 400 GB of output (O).
These limitation were imposed by the system available for
testing. In three cases it has exceeded the system stack limit;
this is marked by~S.

It is easy to see that \ttt{cross}$n$ has Hilbert series $(1+t)^n/(1-t)^{n+1}$. Therefore it is a good test candidate for the correctness of the algorithm.

\begin{table}[hbt]
\centering
\begin{small}
\begin{tabular}{|r|r|r|r|r|r|}\hline
\rule[-0.1ex]{0ex}{2.5ex}Input& \ttt{LattE ES} & \ttt{LattE+M ES} & \ttt{LattE EP}  &\ttt{Nmz  1x} &\ttt{Nmz 20x}\\ \hline
\strut \ttt{CondPar} & O        & S          & --        & 18.085 s   & 8.949 s\\ \hline
\strut \ttt{PlVsCut} & O        & S          & --        & --         & 145:43:03 h\\ \hline
\strut \ttt{CEffPl}  & O        & S          & --        & --         & 197:45:10 h\\ \hline
\strut \ttt{4x4}     & 0.329 s  & 4.152 s    & --        & 0.006 s    & 0.018 s\\ \hline
\strut \ttt{5x5}     & O        & 72:39:23 h & --        & 3:59 m     & 1:12 m\\ \hline
\strut \ttt{bo5}     & T        & T          & T         & 82:40:18 h & 6:41:12 h \\ \hline
\strut \ttt{lo6}     & R        & R          & T         & 13:02:44 h & 1:21:52 h \\ \hline
\strut \ttt{small}   & 46.266 s & 30:15 m    & 22.849 s  & 0.233 s    & 0.095 s\\ \hline
\strut \ttt{big}     & R        & R          & 10.246 s  & 1.473 s    & 0.148 s\\ \hline
\strut \ttt{cyclo36} & R        & R          & 23:03 m   & 1.142 s    & 1.106 s\\ \hline
\strut \ttt{cyclo38} & R        & R          & R         & 26.442 s   & 22.789 s\\ \hline
\strut \ttt{cyclo42} & R        & R          & 1:44:07 h & 3.942 s    & 1.521 s\\ \hline
\strut \ttt{cyclo60} & R        & R          & T         & 5:57 m     & 1:44 m\\ \hline
\strut \ttt{A443}    & R        & R          & R         & 49.541 s   & 18.519 s\\ \hline
\strut \ttt{A553}    & R        & R          & T         & 88:21:18 h & 6:29:05 h\\ \hline
\strut \ttt{cross10} & T        & T          & 9.550 s   & 0.016 s    & 0.022 s \\ \hline
\strut \ttt{cross15} & R        & R          & 21:48 m   & 0.536 s    & 0.533 s \\ \hline
\strut \ttt{cross20} & R        & R          & R         & 26.678 s   & 26.029 s\\ \hline

\end{tabular}
\vspace*{2ex} \caption{Computation times for Hilbert series and Hilbert polynomials}\label{times2}
\end{small}
\end{table}

\begin{remark}\label{timeRem}
(a) From the Hilbert series calculation of \ttt{PlVsCut} we
have obtained the following statistics on the types of
simplicial cones:
\begin{enumerate}
\item $61,845,707,957$ are unimodular,
\item $108,915,272,879$ are not unimodular, but satisfy
    condition (PU1), and of these
\item $62,602,898,779$ are potentially unimodular.
\end{enumerate}
This shows that condition (PU2) that was added at a
later stage has a satisfactory effect. (The number of
potentially unimodular, but nonunimodular simplicial
cones is rather high in this class.) The average value
of $|\det G_\sigma|$ is $\approx 10$. This can be read
off Table \ref{data} since the sum of the $|\det
G_\sigma|$ is the number of components of the Stanley
decomposition.

The number of nongeneric simplicial cones is $129,661,342$. The
total number $s$ of linear systems that had to be solved for
the computation of the Hilbert series is bounded by
$516,245,872,838\le s \le 516,375,534,180$.

The total number of pyramids was $80,510,681$. It depends on
the number of parallel threads that are allowed.

(b) For examples with a high proportion of unimodular cones the
exploitation of unimodularity based on Proposition
\ref{uniexploit} is very efficient in volume computations. With this strategy, \ttt{lo6} requires only $102,526,351$  determinant
calculations instead of $5,801,113,080$. For \ttt{PlVsCut} it
saves about $25\%$.

(c) For the examples from social choice theory
(\ttt{CondPar}, \ttt{CEffPl}, \ttt{PlVsCut}) Schürmann
\cite{Sch} has suggested a very efficient improvement via
symmetrization that replaces the Ehrhart series of a polytope
by the generalized Ehrhart series of a projection. Normaliz now
has an offspring, NmzIntegrate, that computes generalized
Ehrhart series; see Bruns and Söger \cite{BS}.

The volumes of the pertaining polytopes had already been
computed by Schürmann with LattE integrale. This information
was very useful for checking the correctness of Normaliz.

(d) The short Normaliz computation times for the \ttt{cyclo}
and \ttt{cross} examples are made possible by the special
treatment of simplicial facets in the Fourier-Motzkin
elimination; see \cite{BI}.
\end{remark}

\section{Acknowledgement}

The authors like to thank  Mihai Cipu, Matthias K\"{o}ppe,
Achill Sch\"{u}rmann, Bernd Sturmfels, Alin \c{S}tefan and
Volkmar Welker  for the test examples that were used during the
recent development of Normaliz and for their useful comments.
We are grateful to Elisa Fascio for her careful reading of the
first version and Lukas Katthän for a reference to \cite{Sta1} in connection with the order vector.

Bogdan Ichim was partially supported a grant of CNCS - UEFISCDI, project number PN-II-RU-TE-2012-3-0161
during the preparation of this work and the development of Normaliz.

The Normaliz project is supported by the DFG Schwerpunktprogramm 1489 ``Algorithmische und ex\-pe\-rimentelle Methoden in Algebra, Geometrie und Zahlentheorie''
\section*{References}

\end{document}